\documentclass[11pt]{article}

\usepackage[cmex10]{amsmath}
\usepackage{amssymb,amsfonts,amsthm}
\usepackage{mathrsfs}
\usepackage{graphicx}
\usepackage{caption,subcaption}
\usepackage{colortbl}
\usepackage{multirow,multicol,diagbox,slashbox}
\usepackage{enumerate}
\usepackage{lineno}
\usepackage[numbers,sort&compress]{natbib}
\usepackage{multirow,multicol,diagbox}  
\usepackage{authblk}

\usepackage[hidelinks]{hyperref}

\newtheorem{definition}{Definition}
\newtheorem{theoremx}{Theorem}
\newtheorem{lemmax}{Lemma}

\newtheorem{remark}{Remark}

\newtheorem{example}{Example}
\newtheorem{assumption}{Assumption}
\renewcommand\thefootnote{\fnsymbol{footnote}} 

\title{Semiglobal Input-Delay Tolerance Algorithm for Distributed Nonconvex Optimization\\
of Networked Nonlinear Systems%
\thanks{This manuscript is an extended version of our paper published in \emph{IEEE Transactions on Automatic Control}, {2026}, doi:\textbf{10.1109/TAC.2026.3702930}. \copyright~\textbf{2026} IEEE. Personal use is permitted. For other uses, permission must be obtained from IEEE.}%
}

\author{
Jing-Zhe Xu\textsuperscript{1},
Zhi-Wei Liu\textsuperscript{1},
Ming-Feng Ge\textsuperscript{2},
Yan-Wu Wang\textsuperscript{1},
and Dinxin He\textsuperscript{1}
}

\date{} 

\begin{document}
\maketitle

\renewcommand\thefootnote{\arabic{footnote}} 
\setcounter{footnote}{0}

\footnotetext[1]{School of Artificial Intelligence and Automation, Huazhong University of Science and Technology, Wuhan 430074, China (e-mail: jzxu@hust.edu.cn; zwliu@hust.edu.cn; wangyw@hust.edu.cn; hedingxin@hust.edu.cn).}

\footnotetext[2]{School of Mechanical Engineering and Electronic Information, China University of Geosciences, Wuhan 430074, China (e-mail: gemf@cug.edu.cn).}

\footnotetext[3]{\textbf{Acknowledgment:} This work was supported by the National Natural Science Foundation of China under Grants U24A20268, 624B2055, 62373162, and 62473349.}

\begin{abstract}
This paper studies a class of distributed optimization problems in networked nonlinear systems (NNSs) subject to input delays and consensus constraints.
It introduces input-delay tolerant semiglobal convergence (IDTSC), meaning that for any prescribed compact initial set there exists an admissible delay bound under which the optimal solution is computed within consensus constraints and all node states converge to the solution.
Building on a hierarchical design and input-to-state stability analysis,
a new semiglobal input-delay tolerant (SIDT) algorithm is developed that practically achieves IDTSC for distributed optimization under the coupling between input delays and nonlinear dynamics.
Further, by relaxing strict convexity requirements through the Polyak-{\L}ojasiewicz condition,
the SIDT algorithm broadens its applicability to nonconvex optimization.
Finally, numerical experiments corroborate the theory on NNSs with input delays.
\end{abstract}

\paragraph{Index Terms}
Input delay tolerance, distributed optimization, nonconvex optimization, networked nonlinear systems, semiglobal asymptotic convergence.

\section{Introduction}
Distributed optimization has attracted sustained attention in networked systems because of its decomposability, data locality, and robustness to communication imperfections \cite{LiT2008TAC,NediA2018,TatarenkoT2017,WangY2023,WangQ2019,HouskaB2016,XuL2023,Firouzbahrami2022-2}.
Most of these works now exist along two main lines: discrete-time strategies  \cite{TatarenkoT2017,WangY2023,WangQ2019} and continuous-time strategies \cite{HouskaB2016,XuL2023,Firouzbahrami2022-2}, to find the solution of the optimization problem.
In most continuous-time studies, the ``state" typically represents an algorithmic iterate rather than a physical plant state, and node dynamics are either omitted or idealized as single-integrator models (effectively $\dot x_i = u_i$) \cite{HouskaB2016,XuL2023,Firouzbahrami2022-2}.
As a consequence, these results only focus on solving the optimization computation problem itself essentially, not the joint problem of regulating a dynamical network while solving the distributed optimization task.

As the field has matured, distributed optimization has moved from purely algorithmic studies to practical deployments in sensor networks \cite{Rabbat2004,ChenJ2016}, machine learning \cite{Nedic2020}, smart grids \cite{AdikaC2014,YuW2018} and robotics networks \cite{Riviere2021,LiangCD2023}.
In these settings the decision variables are the physical states of nodes rather than abstract iterates, and the dynamics of the node are richer than a single integrator and often nonlinear \cite{Rabbat2004,ChenJ2016,Nedic2020,AdikaC2014,YuW2018,Riviere2021,LiangCD2023}.
Thus, algorithms that merely compute an optimizer \cite{HouskaB2016,XuL2023,Firouzbahrami2022-2} are not directly applicable since the design must both solve the distributed optimization problem and regulate the nonlinear dynamics so that the states converge to the optimizer.
To this end, recent works therefore blend control with optimization \cite{YuW2018,Riviere2021,LiangCD2023,LiuT2021,WangX2015}.
For example, hierarchical distributed controllers have been developed for fleets with Euler-Lagrange dynamics, including unmanned surface vehicles \cite{LiangCD2023}, and small gain methods are used to achieve optimal output consensus in uncertain nonlinear multi-agent systems \cite{LiuT2021}.

However, most of these optimization control designs are developed under delay free assumptions \cite{Riviere2021,LiangCD2023,Rabbat2004,ChenJ2016,Nedic2020,AdikaC2014,YuW2018,LiuT2021,WangX2015}.
In above application domains (e.g., robotics networks) that apply these methods, delays are difficult to avoid \cite{Abadia2021,JinM2017,MuslehAS2018}.
Specifically, hardware limits and environmental disturbances introduce latency in sensing, computation, communication, and actuation \cite{Abadia2021}.
When delays interact with nonlinear dynamics, they corrupt gradient feedback and create residual terms that can destroy forward invariance and, even when arbitrarily small, undermine stability and prevent convergence to the optimal solution \cite{LinWei2020}.
Previous delay handling optimization methods use Lyapunov Krasovskii functionals and are effective only for linear systems with delays \cite{YangS2016,WangD2018}.
Moreover, optimization methods based on switching communication graphs face related limits \cite{LinPAuto2016}, as they offer no general stability guarantee for nonlinear plants.
Therefore, distributed optimization control for networked nonlinear systems with delays remains largely gap.

Furthermore, many practical applications of distributed optimization, including multirobot motion planning \cite{YaziciA2013,LiangCD2023} and economic dispatch in power grids \cite{YiP2015}, necessitate strict adherence to state constraints \cite{YaziciA2013,GharesifardB2013,YiP2015,ZengX2016,LiuQ2017,WangXF2020,LiangCD2023}.
These state constraints are intrinsic to real systems, often arising from physical and safety limits, task-level coordination that enforces state consistency, and conservation requirements at the network scale \cite{YaziciA2013,YiP2015}.
Nevertheless, designing algorithms that minimize global cost functions within the required constraints is a formidable challenge.
To address this, consensus-based optimization approaches utilizing nonsmooth Lyapunov functions have been proposed \cite{LiangCD2023,GharesifardB2013,ZengX2016,LiuQ2017,WangXF2020}.
Note that these methods remain inadequate for systems with input delays, as they rely on optimization in delay-free environments that consider only current states and control inputs.
The presence of input delays necessitates the incorporation of historical state information, thereby complicating the enforcement of real-time constraints.
This fundamental alteration in the optimization problem's structure underscores the need for more robust algorithms capable of handling input delays in constrained distributed optimization.

Building on the insights from previous discussions, this paper investigates a class of distributed convex and nonconvex optimizations for networked nonlinear systems (NNSs) subject to input delays and consensus constraints.
To this end, a novel optimization control algorithm is proposed to compute the optimal solution within consensus constraints and, at the same time, regulate the node states to approach the computed optimal solution.
The key contributions of this paper are summarized as follows:
\begin{enumerate}
  \item A novel concept of input-delay tolerant semiglobal convergence (IDTSC) is provided to cope with the fact that unknown input delays together with nonlinear node dynamics can break the solvability of the distributed optimization problem and prevent the network state from converging.
      Compared with linear-delay optimization schemes \cite{LinPAuto2016,YangS2016,WangD2018}, IDTSC concept follows a semiglobal principle that links the radius of the initial condition set to an admissible delay margin, thereby offering implementable guarantees for distributed optimization over nonlinear nodes.
  \item Unlike existing distributed constrained optimization methods without considering time delays \cite{GharesifardB2013,ZengX2016,LiuQ2017,WangXF2020}, the proposed algorithm couples a hierarchical structure with input-to-state stability (ISS) theory guaranteeing that the states return to and stay in the constraint set and are driven toward the constrained optimizer under the input delay.
  Importantly, the algorithm is delay-independent, implying that its gains do not depend on the delay and require no delay-based retuning.
  \item 
  The strong-convexity requirement is relaxed to the Polyak–{\L}ojasiewicz (P–{\L}) condition, under which the proposed design still guarantees input-delay tolerant practical semiglobal convergence for distributed optimization of NNSs with input delays and consensus constraints.
\end{enumerate}

The paper is organized as follows:
Section \uppercase\expandafter{\romannumeral2} introduces preliminaries and formulates the problem.
Section \uppercase\expandafter{\romannumeral3} presents the SIDT algorithm and the theoretical results.
Section \uppercase\expandafter{\romannumeral4} provides numerical validation, and section \uppercase\expandafter{\romannumeral5} draws a conclusion with future directions.

\section{Problem Formulation And Preliminary}

\subsection{Notations and Definitions}
Notations:
Let \(\mathbb{R}^n\) denote the \(n\)-dimensional real vector space, and \(\mathbb{R}^{n \times n}\) denotes the space of \(n \times n\) real matrices.
The absolute value of a variable \( x \) is denoted by \( |x| \), and the function \(\textrm{sig}(x)^a\) is defined as \(\textrm{sign}(x) |x|^a\).
Additionally, \(\textbf{I}_n\) represents \(n \times n\) diagonal matrix with ones on the main diagonal.
$\mathbf{1}_n$ and \(\mathbf{0}_n\) denote the $n$-dimensional all-ones and all-zero column vector, respectively.
\(\nabla f(x)\) and \(\nabla^2 f(x)\) represent the gradient vector and Hessian matrix of a scalar field \( f(x) \), respectively.
Let $\otimes$ denote the Kronecker product.

The following stability concept is foundational for in this paper.

\begin{definition}\textbf{(Global asymptotic and local exponential stability)}\label{definition_GALES}
Consider a nonlinear system:
\begin{align}\label{definition_system1}
\dot{x} = f(x, u),
\end{align}
where \( x \in \mathbb R^n \) is the state, and \( u \in \mathbb R^m  \) is the feedback control law.
This system (\ref{definition_system1}) is said to be globally asymptotically and locally exponentially (GALE) stabilizable through state feedback if there exists a smooth feedback control law \( u = \xi(x) \),
where \(\xi : \mathbb{R}^n \rightarrow \mathbb{R}^m\) is a smooth function satisfying \(\xi(0) = 0\), such that the closed-loop system:
\[
\dot{x} = f(x, \xi(x)) \triangleq \bar{f}(x),
\]
exhibits global asymptotic stability and local exponential stability at the equilibrium point \( x = 0 \).
\end{definition}

\subsection{Communication graph}
Consider an undirected graph \({\mathcal{G}} = ({\mathcal{V}}, {\mathcal{E}})\), where \({\mathcal{V}}\) represents a set of \(N\) nodes, and \({\mathcal{E}} \subseteq {\mathcal{V}} \times {\mathcal{V}}\) denotes the edges between them.
For each node \(i\), the neighborhood set is defined as \(\mathcal{N}_i = \{ j \in {\mathcal{V}} \mid (i, j) \in {\mathcal{E}} \}\).
The connectivity of the graph is represented by the adjacency matrix \(\mathcal{A}\), where \(a_{ij} > 0\) if nodes \(i\) and \(j\) are connected (i.e., \((i, j) \in {\mathcal{E}}\)), and \(a_{ij} = 0\) otherwise.
The degree matrix \(\mathcal{D}\) is a diagonal matrix, with the \(i\)-th diagonal entry given by \(d_i = \sum_{j \neq i} a_{ij}\), representing the degree of node $i$.
The Laplacian matrix \({\mathcal{L}}\) of the graph is then defined as ${\mathcal{L}} = \mathcal{D} - \mathcal{A}$.

\subsection{Problem Description}
Consider a NNS of $n$ nodes interacting over a graph ${\mathcal G}=(\mathcal V,\mathcal E)$.
Each node $i \in \mathcal V$ has scalar state $x_i(t)\in\mathbb R^m$ and the input is $u_i(t)\in\mathbb R^m$.
The dynamics are governed by:
\begin{align}\label{system}
\dot{x}_i(t)=  f_i(x_i(t)) + g_i(x_i(t))u_i(t-d), \quad i = 1, \ldots, n,
\end{align}
where $d$ denotes input delay,
smooth maps $f_i: \mathbb R^m \to \mathbb R^m$ and $g_i:\mathbb R^m \to \mathbb R$ are smooth,
and in particular $f_i(\mathbf{0}_m) = \mathbf{0}_m$.
In what follows, every occurrence of a delayed argument $y(t-d)$ is to be interpreted as $y$ evaluated at a delayed time stamp, that is, $y(t-d)$ denotes the most recent available sample of $y$ at time $t$,
where $y$ may denote any signal constructed from the states and exchanged variables (e.g., $x_i$, $u_i$).

Then, each node $i$ is endowed with a local cost function $\phi_i:\mathbb R^m \to \mathbb R$.
Let $x := [x_1,\dots,x_n]^T \in\mathbb R^{nm}$ and define the aggregate objective
$\Phi(x) := \sum_{i=1}^n \phi_i(x_i)$.
The goal is to minimize the aggregate cost under consensus constraints:
\begin{align}\label{Cx}
&\min_{x \in \mathbb{R}^{nm}} \Phi (x) = \sum_{i=1}^{n} \phi_{i}(x_i), \notag\\
&\text{subject to} \quad x_i = x_j, \quad \forall i,j \in {\mathcal{V}},
\end{align}
where the constraint $x_i=x_j$, $\forall i,j \in {\mathcal{V}}$.
Denote the optimal set by:
\begin{align*}
	\mathcal X^* := \arg\min_{x\in\mathbb R^{nm}, x_i=x_j, \forall i,j \in {\mathcal{V}} }\ \Phi(x),
\end{align*}
assumed nonempty, and the optimal value by $\Phi^* := \min_{x\in\mathbb R^{nm},\ x_i=x_j}\Phi(x)>-\infty$.
The control objective is to design distributed inputs $u_i$ such that the NNS \eqref{system} drive the state $x$ to the optimal consensual solution $x^* \in \mathcal X^*$,
where $x^* = [x_1^*,\ldots,x_n^*]^T$ and $x_i^* = x_j^*$, $\forall i ,j \in {\mathcal V}$.
Several needed assumptions are provided as follows.



\begin{assumption}
For each node $i$, the scalar gain $g_i:\mathbb R^m \to \mathbb R$ is $C^{1}$ and bounded away from zero. Namely, there exist constants $0<\underline g_i\le \overline g_i<\infty$ such that:
$$
\underline g_i \le |g_i(x)| \le \overline g_i.
$$
\end{assumption}

\begin{assumption}
Each local cost function \(\phi_i(x)\) is twice continuously differentiable with respect to \(x\) and globally \(L_{\phi_i}\)-Lipschitz continuous gradient. For \(x_{1}, x_{2} \in \mathbb{R}^m\), the following holds:
\[
|\nabla \phi_i(x_{1}) - \nabla \phi_i(x_{2})| \leq L_{\phi_i} ||x_{1} - x_{2}||.
\]
\end{assumption}

\begin{assumption} (Strong convexity condition)
Each local cost function \(\phi_i(x)\) is strongly convex with parameter \(m_{\phi_i} > 0\). Specifically, for any \(x_{1}, x_{2} \in \mathbb{R}^m\) and \(\theta \in [0, 1]\), the following condition holds:
\begin{align*}
\phi_i(\theta x_{1} + (1-\theta)x_{2}) \leq & \theta \phi_i(x_{1}) + (1-\theta)\phi_i(x_{2})  - \frac{m_{\phi_i}}{2} \theta (1-\theta) ||x_{1} - x_{2}||^2.
\end{align*}
\end{assumption}

Building on the challenges outlined earlier, we now formalize the concept of input-delay tolerant semiglobal convergence to address distributed optimization problems under input delays.

%
%

\begin{definition}\textbf{(Input-Delay Tolerant Semiglobal Convergence (IDTSC) in Distributed Optimization)}\label{definition_C}
Consider the distributed constrained optimization problem (\ref{Cx}) implemented on a networked nonlinear system (\ref{system}) with input delays. Let:
\[
u_i(t) = \xi_i(x_i(t)),
\]
for each node \(i\), where \(\xi_i: \mathbb{R}^m \to \mathbb{R}^m\) is a globally stabilizing control law. Denote by \(x_i^* \in \mathbb{R}^{m}\) the optimal solution of the optimization problem (\ref{Cx}). For any prescribed constant \(r > 0\), define the initial condition ball:
\[
\mathcal{B}_r(x_i^*) \triangleq \big\{ \varphi \in C([-d, 0],\mathbb{R}) : \|\varphi - x_i^*\| \le r \big\},
\]
where the norm is given by \(\|\varphi\| = \sup_{s \in [-d, 0]} \|\varphi(s)\|\). We say that the closed-loop system exhibits \emph{input-delay tolerant semiglobal convergence (IDTSC)} in distributed optimization if there exists a maximal allowable delay \(\bar{d} = \bar{d}(r) > 0\) such that, for all delay values \(d \in (0,\bar{d}]\), the following properties hold:
\begin{enumerate}
    \item \emph{Semiglobal Attractivity}: For every \(i\in \mathcal{V}\) and any initial condition \( x_i(s) \in \mathcal{B}_r(x_i^*) \), the trajectory \(x_i(t)\) of the closed-loop system,
    \begin{align}\label{closed-system1}
    \dot{x}_i(t) = f_i(x_i(t)) + g_i(x_i(t))\,\xi_i(x_i(t-d)),
    \end{align}
    satisfies
    \[
    \lim_{t \to \infty} x_i(t) = x_i^*.
    \]
    \item \emph{Local Stability}: The closed-loop system is locally asymptotically stable at the optimal solution.
\end{enumerate}
\end{definition}

To extend the scope of IDTSC beyond convex optimization, we refine the Assumption 3 on local cost functions to accommodate a broader class of nonconvex problems by employing the Polyak-{\L}ojasiewicz (P-{\L}) condition.

\begin{assumption}
Each local cost function \(\phi_i(x_i)\) satisfies radially unbounded and the P-{\L} condition:
\begin{equation}\label{assumption3}
\frac{1}{2} \|\nabla \phi_i(x_i) \|^2 \geq \mu_{\phi_i} (\phi_i(x_i) - \phi_i(x_i^*)),
\end{equation}
where \(\mu_{\phi_i}\) is a positive constant, and \(x_i^*\) is an optimal solution of the optimization problem (\ref{Cx}).
\end{assumption}

\begin{remark}
The P-{\L} condition in Assumption 4 provides a less restrictive alternative to strong convexity.
It relaxes the requirements compared to assumptions like essential strong convexity \cite{LiuJ2014}, weak strong convexity \cite{NecoaraI2019}, or the restricted secant inequality \cite{RiedmillerM1992}.
This flexibility allows the proposed framework to handle a broader class of nonconvex optimization problems \cite{Karimi2016,PovedaJIACC2020}, significantly expanding its practical applicability.
\end{remark}

\subsection{Preliminary lemmas}
To support the theoretical analysis of the proposed algorithm, we summarize several essential lemmas that establish foundational properties and facilitate the stability and convergence proofs.

\begin{lemmax}\label{lemma1}
\cite{RudinW1976}
({Cauchy-Schwarz inequality})
Let \( x = [x_1, x_2, \dots, x_n]^T \) and \( y = [y_1, y_2, \dots, y_n]^T \) be vectors in \( \mathbb{R}^n \). Then, the following inequality holds:
\[
\sum_{i=1}^n |x_i| |y_i| \leq \|x\| \|y\|,
\]
where \( \|x\| = \left( \sum_{i=1}^n |x_i|^2 \right)^{1/2} \) and \( \|y\| = \left( \sum_{i=1}^n |y_i|^2 \right)^{1/2} \) are the Euclidean norms of \( x \) and \( y \), respectively.
\end{lemmax}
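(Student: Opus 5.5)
The inequality in Lemma~\ref{lemma1} reduces to the classical Cauchy--Schwarz inequality for the Euclidean inner product on $\mathbb{R}^n$. Set $a=[|x_1|,\dots,|x_n|]^T$ and $b=[|y_1|,\dots,|y_n|]^T$. Then $\|a\|=\|x\|$ and $\|b\|=\|y\|$, and the left-hand side equals the inner product $a^Tb$. It therefore suffices to show $a^Tb\le\|a\|\|b\|$ for arbitrary vectors $a,b\in\mathbb{R}^n$.

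For that bound I will use the discriminant argument. If $b=\mathbf{0}_n$, both sides vanish and there is nothing to prove. Otherwise, for every $\lambda\in\mathbb{R}$,
\begin{align*}
0\le \|a-\lambda b\|^2=\|a\|^2-2\lambda\, a^Tb+\lambda^2\|b\|^2 .
\end{align*}
Choosing $\lambda=a^Tb/\|b\|^2$ gives
\begin{align*}
0\le \|a\|^2-\frac{(a^Tb)^2}{\|b\|^2},
\end{align*}
so $(a^Tb)^2\le\|a\|^2\|b\|^2$. Taking square roots yields $|a^Tb|\le\|a\|\|b\|$.

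An equivalent alternative is termwise AM--GM: if $\|x\|\|y\|>0$, then
\begin{align*}
\frac{|x_i||y_i|}{\|x\|\|y\|}\le \frac12\Big(\frac{|x_i|^2}{\|x\|^2}+\frac{|y_i|^2}{\|y\|^2}\Big).
\end{align*}
Summing over $i$ gives the bound $1$, and the degenerate case where $\|x\|\|y\|=0$ is trivial.

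There is no real obstacle here. The only points needing care are the zero-vector case and noting that replacing each coordinate by its absolute value leaves the norms unchanged. That observation is what lets the signed inner-product inequality control the sum of absolute products.
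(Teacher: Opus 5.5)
Your proof is correct and matches the approach the paper relies on: the paper gives no proof of its own and simply cites Rudin, whose standard argument expands the nonnegative quantity $\sum_j |B a_j - C b_j|^2$ with $B=\|b\|^2$ and $C=a^Tb$, which is your choice $\lambda = a^Tb/\|b\|^2$ scaled by $B^2$. Your first step, passing to the vectors of absolute values (which leaves the norms unchanged), is what turns the signed inner-product inequality into the form stated in the lemma, and your AM--GM variant is an equally valid alternative.
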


%
%
%
%
%
%
%


\begin{lemmax}\label{lemma2}
{\cite{LinWei2020}}
Let \( f : \mathbb{R}^n \to \mathbb{R}^n \) be a smooth mapping with \( f(0) = 0 \). Then, there exists a smooth, non-negative scalar function \( \Psi : \mathbb{R}^n \times \mathbb{R}^n \to \mathbb{R} \) such that for all \( x, y \in \mathbb{R}^n \),
\[
\| f(x) - f(y) \| \leq \Psi(x, y) \| x - y \|.
\]
\end{lemmax}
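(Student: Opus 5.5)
The idea is to write the difference $f(x)-f(y)$ as a line integral of the Jacobian along the segment joining $y$ to $x$, and then bound the norm of that integral by a smooth function of $(x,y)$ that dominates the Jacobian on the segment. Since $f$ is smooth, the fundamental theorem of calculus applied to $s\mapsto f(y+s(x-y))$ on $[0,1]$ gives
\[
f(x)-f(y)=\int_0^1 \frac{\partial f}{\partial z}\big(y+s(x-y)\big)\,ds\,(x-y).
\]
Taking norms yields
\[
\|f(x)-f(y)\|\le \Big(\sup_{s\in[0,1]}\big\|\tfrac{\partial f}{\partial z}(y+s(x-y))\big\|\Big)\|x-y\|.
\]
So it suffices to find a smooth non-negative $\Psi$ that dominates this supremum. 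Note that the hypothesis $f(0)=0$ is not actually needed for this inequality; it matters only for the companion estimate $\|f(x)\|\le \Psi(x,0)\|x\|$, which follows by setting $y=0$.

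The supremum is continuous but generally not smooth, so I would replace it with a smooth radial majorant. Define
\[
M(\rho)=\max_{\|z\|\le \rho}\Big\|\frac{\partial f}{\partial z}(z)\Big\|.
\]
This function is continuous and nondecreasing in $\rho\ge 0$. Every point of the segment satisfies $\|y+s(x-y)\|\le \|x\|+\|y\|\le \sqrt{2}\,(\|x\|^2+\|y\|^2)^{1/2}$, so the supremum above is at most $M\big(\sqrt{2(\|x\|^2+\|y\|^2)}\big)$.

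The main technical step is to construct a smooth nondecreasing $\bar M:[0,\infty)\to[0,\infty)$ with $\bar M\ge M$. I would use the standard averaging trick: $\bar M(\rho)=1+\int_\rho^{\rho+1}M(\sigma)\,d\sigma$ is $C^1$ and dominates $M(\rho)$, and iterating this averaging, or convolving with a mollifier supported to the right, gives $C^\infty$. Finally, set
\[
\Psi(x,y)=\bar M\big(2(\|x\|^2+\|y\|^2)+1\big).
\]
Because the squared norms are smooth, $\Psi$ is smooth on $\mathbb{R}^n\times\mathbb{R}^n$ and non-negative. Since $\bar M$ is nondecreasing and $2(\|x\|^2+\|y\|^2)+1\ge \sqrt{2(\|x\|^2+\|y\|^2)}$, we have $\Psi(x,y)\ge M\big(\sqrt{2(\|x\|^2+\|y\|^2)}\big)$, which completes the bound.

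The one point needing care is the smoothing step: the argument must be a smooth function of $(x,y)$ (hence the squared norms rather than the norms, and the $+1$ shift to avoid issues at the origin), while preserving both domination and monotonicity.
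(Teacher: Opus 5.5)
Your proof is correct. The paper gives no proof of this lemma: it is imported from \cite{LinWei2020} and used as a black box, so there is no in-paper argument to compare against. Your route is the standard one behind such lemmas: the integral mean value formula along the segment from $y$ to $x$, followed by dominating the continuous Jacobian bound with a smooth function.

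Your explicit version adds two things:
\begin{itemize}
\item[(i)] It shows that $f(0)=0$ plays no role in the difference estimate; it matters only for the companion bound $\|f(x)\|\le\Psi(x,0)\|x\|$.
\item[(ii)] It produces a concrete $\Psi$ that depends only on $\|x\|^2+\|y\|^2$ and is nondecreasing in it. Its supremum over any ball, which is how the paper later turns such functions into constants in Step 2 of Theorem 1, can therefore be read off directly.
\end{itemize}

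One claim needs tightening. Finitely many iterations of $\rho\mapsto 1+\int_\rho^{\rho+1}M(\sigma)\,d\sigma$ give only $C^k$. Iterating indefinitely does not converge: because of the $+1$ and the monotonicity of $M$, the iterates grow without bound. The $C^\infty$ conclusion must therefore come from the one-sided mollification $\bar M(\rho)=1+\int_0^1 M(\rho+\sigma)\eta(\sigma)\,d\sigma$, where $\eta\ge 0$ is smooth, supported in $[0,1]$, and satisfies $\int_0^1\eta=1$. Rewriting it as $1+\int M(\tau)\eta(\tau-\rho)\,d\tau$ shows that all $\rho$-derivatives fall on $\eta$, so $\bar M$ is $C^\infty$. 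Monotonicity of $M$ then gives both $\bar M\ge M$ and that $\bar M$ is nondecreasing. With this choice, the rest of your argument, including $\Psi(x,y)=\bar M\big(2(\|x\|^2+\|y\|^2)+1\big)$ and the inequality $t+1\ge\sqrt{t}$, goes through unchanged.
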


\begin{lemmax}\label{lemma3}
{\cite{LinWei2020}}
Let \( f : \mathbb{R}^n \times \mathbb{R}^m \to \mathbb{R} \) be a smooth function. Then, there exist smooth scalar functions \( a(x) \geq 0 \), \( b(y) \geq 0 \), \( c(x) \geq 1 \), and \( d(y) \geq 1 \) for all \( x \in \mathbb{R}^n \) and \( y \in \mathbb{R}^m \), such that
\[
| f(x, y) | \leq a(x) + b(y) \quad  \textrm{and} \quad | f(x, y) | \leq c(x) d(y).
\]
\end{lemmax}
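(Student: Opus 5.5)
The plan is to construct the functions explicitly from the smoothness of $f$. Fix $x,y\in\mathbb{R}^n$. I will first establish a decomposition lemma (Hadamard-type) and then dominate the continuous bound by a smooth one. Since $f(0,0)$ need not vanish, I will treat the constant term separately. Write
\[
f(x,y)=f(0,0)+\bigl(f(x,0)-f(0,0)\bigr)+\bigl(f(x,y)-f(x,0)\bigr).
\]

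\textbf{Step 1 (continuous bound for each piece).} Consider
\[
M_1(x):=\max_{\|s\|\le\|x\|}|f(s,0)-f(0,0)|, \qquad M_2(x,y):=|f(x,y)-f(x,0)|.
\]
The difficulty is that $M_2$ couples $x$ and $y$. I will handle it by a crude product bound:
\[
M_2(x,y)\le \max_{\|s\|\le\|x\|,\ \|w\|\le\|y\|}|f(s,w)|+|f(s,0)| =: K(\|x\|,\|y\|),
\]
where $K$ is continuous and nondecreasing in each argument. Any continuous nondecreasing $K(r,\rho)\ge0$ satisfies
\[
K(r,\rho)\le K(r,r)+K(\rho,\rho),
\]
since either $\rho\le r$ or $r\le\rho$ and $K$ is monotone. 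Setting $\kappa(r):=K(r,r)$, this gives
\[
|f(x,y)|\le |f(0,0)|+M_1(x)+\kappa(\|x\|)+\kappa(\|y\|).
\]
Each term is a continuous nonnegative function of one variable.

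\textbf{Step 2 (smoothing).} The main technical point is replacing continuous radial majorants by smooth ones. For a continuous nondecreasing $\kappa:[0,\infty)\to[0,\infty)$, I will define
\[
\tilde\kappa(r)=\int_r^{r+1}\kappa(s)\,ds+r ,
\]
which is $C^1$ and satisfies $\tilde\kappa\ge\kappa$. Iterating the integral yields $C^\infty$, or one may use a standard mollifier applied to the shifted majorant $\kappa(\cdot+1)$. Composing with $r=\|x\|^2$ avoids non-smoothness of $\|x\|$ at the origin; this requires first writing the majorants as functions of $\|x\|^2$, which the monotonicity permits. This yields smooth
\[
a(x)=|f(0,0)|+\tilde M_1(\|x\|^2)+\tilde\kappa(\|x\|^2)\ge0, \qquad b(y)=\tilde\kappa(\|y\|^2)\ge0,
\]
and hence $|f(x,y)|\le a(x)+b(y)$.

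\textbf{Step 3 (product form).} Set $c(x)=1+a(x)\ge1$ and $d(y)=1+b(y)\ge1$. Then
\[
c(x)d(y)=1+a(x)+b(y)+a(x)b(y)\ge a(x)+b(y)\ge|f(x,y)|,
\]
which gives the second inequality with smooth $c,d$.

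The only genuinely delicate step is Step 2, making the majorants smooth rather than merely continuous. I expect the integral-averaging construction composed with $\|\cdot\|^2$ to resolve it.
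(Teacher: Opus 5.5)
The paper gives no proof of this lemma; it is simply quoted from \cite{LinWei2020}. Your argument is correct and is essentially the standard proof of this separation lemma: majorize $|f(x,y)|$ by a radial maximum, use monotonicity to split the two-radius bound into one-variable terms, dominate those by smooth radial functions, and take $c=1+a$, $d=1+b$.

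Only minor tidying is needed. First, finitely many iterations of your averaging integral give $C^k$, not $C^\infty$, so rely on the mollifier, after extending $\kappa$ by $\kappa(0)$ to negative arguments. Second, the smoothing must be applied to $\sigma\mapsto\kappa(\sqrt{\sigma})$ (or you can use $\kappa(\|x\|)\le\kappa(1+\|x\|^2)$), so that composing with $\|x\|^2$ really dominates $\kappa(\|x\|)$ when $\|x\|<1$. Third, the detour through $f(0,0)$ is unnecessary, since $\max_{\|s\|\le\|x\|,\,\|w\|\le\|y\|}|f(s,w)|$ already majorizes $|f(x,y)|$.
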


\begin{lemmax}\label{lemma5}
\cite{Karimi2016, XuL2023,PovedaJIACC2020}
Given Assumption 4 (P-{\L} condition), the distributed optimization problem (\ref{Cx}) associated with the NNS (\ref{system}) has at least one optimal solution \( x^* \). At this optimal point, the gradient vector field satisfies \( \nabla \Phi (x^*) = 0 \).
\end{lemmax}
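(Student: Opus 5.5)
To prove Lemma~\ref{lemma5}, I would reduce the consensus-constrained problem (\ref{Cx}) to an unconstrained problem on the consensus subspace. Then I would use the P-{\L} inequality (\ref{assumption3}) together with radial unboundedness to obtain both existence and first-order optimality. Write every feasible point as $x=\mathbf{1}_n\otimes z$ with $z\in\mathbb{R}^m$, and define the reduced cost $F(z):=\sum_{i=1}^n\phi_i(z)$. Problem (\ref{Cx}) is then equivalent to $\min_{z\in\mathbb{R}^m}F(z)$. By Assumption~2, $F$ is $C^2$ with $(\sum_i L_{\phi_i})$-Lipschitz gradient.

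\textbf{Existence.} Since each $\phi_i$ is radially unbounded and continuous, it is bounded below. Hence $F$ is radially unbounded, because $F(z)\ge \phi_1(z)+\sum_{i\ge2}\inf\phi_i\to\infty$ as $\|z\|\to\infty$. Pick any $z_0$. The sublevel set $\{z: F(z)\le F(z_0)\}$ is closed and bounded, hence compact. By Weierstrass, $F$ attains a minimum $z^*$ on it, and this is a global minimizer. So $x^*=\mathbf{1}_n\otimes z^*\in\mathcal X^*$, and $\mathcal X^*$ is nonempty with $\Phi^*>-\infty$.

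\textbf{Stationarity.} At the interior minimizer $z^*$ of the smooth function $F$, Fermat's rule gives $\nabla F(z^*)=\sum_i\nabla\phi_i(z^*)=0$. To obtain the stronger blockwise statement $\nabla\Phi(x^*)=0$, meaning $\nabla\phi_i(x_i^*)=0$ for every $i$, I would invoke (\ref{assumption3}) as stated. There $x_i^*$ is the reference point with $\phi_i(x_i)\ge\phi_i(x_i^*)$ for all $x_i$, since the right-hand side must be nonnegative wherever the inequality is used as a P-{\L} bound. Under that reading, $x_i^*$ is a global minimizer of the smooth $\phi_i$, so $\nabla\phi_i(x_i^*)=0$. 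Equivalently, evaluating (\ref{assumption3}) at $x_i=x_i^*$ gives $\tfrac12\|\nabla\phi_i(x_i^*)\|^2\ge0$, and minimality of $x_i^*$ for $\phi_i$ forces the gradient to vanish. Stacking over $i$ yields $\nabla\Phi(x^*)=[\nabla\phi_1(x_1^*)^T,\dots,\nabla\phi_n(x_n^*)^T]^T=0$.

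\textbf{Main obstacle.} The delicate step is justifying that the consensus optimizer $x_i^*$ in (\ref{assumption3}) is simultaneously a minimizer of each individual $\phi_i$. In general, only $\sum_i\nabla\phi_i(z^*)=0$ holds. I would handle this by making explicit that Assumption~4 presupposes $\phi_i(x_i^*)\le\phi_i(x_i)$ for all $x_i$; otherwise the P-{\L} inequality would fail at points with $\phi_i(x_i)<\phi_i(x_i^*)$ and zero gradient. This mirrors the standard argument in \cite{Karimi2016}, where P-{\L} implies that every stationary point is a global minimizer.

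If one prefers the weaker, projected interpretation of $\nabla\Phi(x^*)=0$, namely $(\mathbf{1}_n^T\otimes\mathbf{I}_m)\nabla\Phi(x^*)=0$, no such justification is needed, and Fermat's rule alone suffices. I would remark on both readings.
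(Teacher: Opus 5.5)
Your existence argument and the Fermat step are correct, and they follow a more self-contained route than the paper, which gives no proof of its own and only cites the unconstrained P-{\L} literature. You reduce (\ref{Cx}) to $F(z)=\sum_{i}\phi_i(z)$ on $\mathbb{R}^m$. You get a minimizer from radial unboundedness and Weierstrass alone, so the P-{\L} inequality is not even needed for existence. You then obtain $\sum_{i=1}^n\nabla\phi_i(z^*)=0$, i.e.\ $(\mathbf{1}_n^T\otimes\mathbf{I}_m)\nabla\Phi(x^*)=0$.

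The blockwise step, however, fails. You claim Assumption~4 presupposes $\phi_i(x_i^*)\le\phi_i(x_i)$ for all $x_i$, because ``otherwise the P-{\L} inequality would fail at points with $\phi_i(x_i)<\phi_i(x_i^*)$ and zero gradient''. This is backwards: at such points the right-hand side of \eqref{assumption3} is negative, so the inequality holds trivially. P-{\L} relative to a reference point only excludes stationary points whose value lies \emph{above} the reference value, and raising the reference value only weakens the inequality.

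The paper's own Example~1 is a counterexample to the blockwise conclusion. Each $\phi_i(x)=x^2+ix+i$ satisfies \eqref{assumption3} with $\mu_{\phi_i}=2$ and $x_i^*=-1.5$, the consensus optimizer, since $\phi_i(x)-\phi_i(-1.5)\le (x+i/2)^2=\tfrac14|\nabla\phi_i(x)|^2$. Yet $\nabla\Phi(x^*)=(-2,-1,0,1,2)^T\neq 0$. So $\nabla\phi_i(x_i^*)=0$ for every $i$ is false in general for the consensus-constrained problem. It would force all local costs to share a common minimizer, making the constraint vacuous, and no reading of Assumption~4 as stated rescues it.

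The provable content of the lemma is exactly the projected stationarity you already obtained, $\nabla\Phi(x^*)\perp\mathcal{C}$. This is also the only sense in which the cited unconstrained P-{\L} results apply: to the reduced function $F$, not to $\Phi$ on $\mathbb{R}^{nm}$. You should drop the blockwise argument and state the conclusion in that projected form.
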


\begin{lemmax}\label{lemma6}
\cite{GuK2003} ({Razumikhin theorem})
Consider a nonlinear system with time-delay described by:
\[
\dot{x}(t) = f(x_t),
\]
where \( f : C([-d, 0], \mathbb{R}^n) \rightarrow \mathbb{R}^n \) is a locally Lipschitz continuous function with \( f(0) = 0 \), and the initial condition \( x(\theta) = \varphi(\theta) \) is continuous for \( \theta \in [-d, 0] \).

The system is globally asymptotically stable if there exist:
\begin{enumerate}
    \item A first-order continuously derivable (i.e., \( C^1 \)) function \( V : \mathbb{R}^n \rightarrow \mathbb{R} \).
    \item A continuous (i.e., \( C^0 \)) function \( W : \mathbb{R}^n \rightarrow \mathbb{R} \), both positive definite and proper.
    \item A \( C^0 \) non-decreasing function \( p(s) \) such that \( p(s) > s \) for \( s > 0 \).
\end{enumerate}
such that for all \( \theta \in [-d, 0] \), the following condition holds:
\[
\dot{V}(x(t)) \leq -W(x(t)) \quad \text{whenever} \quad V(x(t + \theta)) \leq p(V(x(t))).
\]
\end{lemmax}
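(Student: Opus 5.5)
The plan follows the classical two-stage Razumikhin argument: first show uniform stability and boundedness of the sublevel sets of $V$, then prove attractivity by a finite ``staircase'' descent of $V$ along trajectories. Since $V$ and $W$ are continuous, positive definite and proper, there are class-$\mathcal K_\infty$ functions $\alpha_1,\alpha_2$ with $\alpha_1(\|x\|)\le V(x)\le \alpha_2(\|x\|)$. Local Lipschitz continuity of $f$ gives local existence and uniqueness of solutions. Global existence will follow once boundedness is shown.

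\emph{Step 1 (boundedness and stability).} Fix an initial function $\varphi$ and set $M:=\sup_{\theta\in[-d,0]}V(\varphi(\theta))\le \alpha_2(\|\varphi\|)$. I claim $V(x(t))\le M$ for all $t\ge 0$. Suppose not, and let $t_1$ be the last time with $V(x(t_1))= M$ before $V$ exceeds $M$ on some interval $(t_1,t_1+\epsilon)$. At such a time, $V(x(t_1+\theta))\le M=V(x(t_1))<p(V(x(t_1)))$ for all $\theta\in[-d,0]$. Thus the Razumikhin condition holds and $\dot V(x(t_1))\le -W(x(t_1))<0$ (if $M>0$), which contradicts $V$ exceeding $M$ immediately after $t_1$. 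Hence $\|x(t)\|\le \alpha_1^{-1}(\alpha_2(\|\varphi\|))$. This gives global existence, boundedness and Lyapunov stability.

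\emph{Step 2 (attractivity).} Fix an arbitrary $a\in(0,M)$. On the compact interval $[a,M]$ the continuous function $p(s)-s$ is positive, so $\delta:=\min_{s\in[a,M]}(p(s)-s)>0$. Also $\gamma:=\min\{W(x): a\le V(x)\le M\}>0$, since this set is compact by properness and excludes the origin. Set $M_0=M$ and $M_{k+1}=\max\{M_k-\delta,a\}$. Suppose $V(x(s))\le M_k$ for all $s\ge t_k-d$.
\begin{itemize}
\item \emph{Forced decrease.} Whenever $V(x(t))> M_k-\delta$ with $t\ge t_k$, monotonicity of $p$ gives $p(V(x(t)))\ge V(x(t))+\delta> M_k\ge V(x(t+\theta))$. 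So the Razumikhin hypothesis holds and $\dot V\le-\gamma$. Consequently $V$ falls below $M_{k+1}$ within time $M_k/\gamma$.
\item \emph{Invariance of the lower level.} Once $V$ is below $M_{k+1}$, it stays there: at any would-be upcrossing of $M_{k+1}>M_k-\delta$ (or of $a$ in the last step) the same inequality gives $\dot V<0$.
\item \emph{Restoring the window.} After one further delay $d$, the entire history window lies below $M_{k+1}$, which provides the hypothesis for stage $k+1$.
\end{itemize}
After at most $\lceil (M-a)/\delta\rceil$ stages, $V(x(t))\le a$ for all large $t$. Since $a$ was arbitrary, $V(x(t))\to0$, and hence $x(t)\to0$. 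Combined with Step 1, this yields global asymptotic stability.

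\emph{Main obstacle.} The delicate point is the ``first crossing'' argument when $V$ merely touches a level. To make it rigorous I would use upper Dini derivatives, or argue on the set $\{t: V(x(t))>c\}$, to exclude tangential crossings without invoking the derivative sign at a single instant. I also need care that $\delta$ and $\gamma$ are chosen uniformly over $[a,M]$, so that the descent uses only finitely many stages. Continuity of $p$ and properness of $V$ are exactly what make these uniform constants available.
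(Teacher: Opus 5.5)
The paper gives no proof of this lemma (it is quoted from Gu--Kharitonov--Chen). Your argument is the classical Razumikhin proof found there and in Hale--Verduyn Lunel, and it is correct: a first-exit bound $V(x(t))\le\sup_{\theta\in[-d,0]}V(\varphi(\theta))$ gives stability and boundedness, and then finitely many descent steps of size $\delta=\min_{s\in[a,M]}(p(s)-s)$ at rate $\gamma=\min\{W(x):a\le V(x)\le M\}$, each followed by a wait of length $d$ to refresh the history window, give attractivity. Continuing the solution globally from boundedness needs the usual standing assumption that $f$ maps bounded subsets of $C([-d,0],\mathbb{R}^n)$ into bounded sets, which local Lipschitz continuity alone does not provide in infinite dimensions.

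Your ``main obstacle'' is harmless. Take $t_1=\inf\{t\ge 0: V(x(t))>M\}$; then $V(x(s))\le M$ on $[t_1-d,t_1]$. Since $V\in C^1$, the ordinary derivative satisfies $\dot V(x(t_1))\le -W(x(t_1))<0$ when $M>0$, which contradicts the choice of $t_1$ without any Dini derivatives. The same reasoning rules out upcrossings in Step 2, provided you state the trigger as $V(x(t))>M_{k+1}$ rather than $V(x(t))>M_k-\delta$, so that $V(x(t))\in[a,M]$ also holds in the final stage.
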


\begin{lemmax}\label{lemma7}
\cite{SontagE2013} (Input-to-State Stability)
Consider the nonlinear dynamical system
\[
\dot{x}(t) = f(x(t), u(t)),
\]
where \( x(t) \in \mathbb{R}^n \) denotes the state vector, \( u(t) \in \mathbb{R}^m \) represents the input vector, and the function \( f: \mathbb{R}^n \times \mathbb{R}^m \to \mathbb{R}^n \) is locally Lipschitz continuous in its arguments.

Assume the existence of a continuously differentiable Lyapunov function \( V: \mathbb{R}^n \to \mathbb{R} \), a class \( \mathcal{K} \) function \( \gamma \), and a positive constant \( \alpha > 0 \) such that for all \( x \in \mathbb{R}^n \) and \( u \in \mathbb{R}^m \), the following conditions are satisfied:
\begin{enumerate}
    \item[a)] \( \alpha_1(\|x\|) \leq V(x) \leq \alpha_2(\|x\|) \), where \( \alpha_1, \alpha_2 \in \mathcal{K}_\infty \).
    \item[b)] \( \nabla V(x) \cdot f(x, u) \leq -\alpha V(x) + \gamma(\|u\|) \).
\end{enumerate}

Under these conditions, the system is input-to-state stable (ISS) with respect to the input \( u(t) \). Specifically, there exist functions \( \beta \in \mathcal{KL} \) and \( \tilde{\gamma} \in \mathcal{K} \) such that for any initial state \( x(0) \) and input \( u(t) \), the following bound holds for all \( t \geq 0 \):
\begin{align}\label{ISS-Lemma}
\|x(t)\| \leq \beta(\|x(0)\|,t) + \tilde{\gamma}\left(\sup_{0 \leq \tau \leq t} \|u(\tau)\|\right), \quad \forall t \geq 0.
\end{align}
\end{lemmax}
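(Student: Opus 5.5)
This is the standard ISS Lyapunov characterization. I will produce an explicit $\mathcal{KL}$ bound from the dissipation inequality b), using a comparison argument.

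\emph{Step 1: integrate the differential inequality.} Fix an initial state $x(0)$ and an input $u$ with $\|u\|_t:=\sup_{0\le\tau\le t}\|u(\tau)\|<\infty$. Since $f$ is locally Lipschitz, a unique solution exists on a maximal interval $[0,T)$. Along it, condition b) gives
\[
\dot V(x(s))\le -\alpha V(x(s))+\gamma(\|u\|_t), \qquad s\in[0,t],
\]
because $\gamma$ is nondecreasing and $\|u(s)\|\le\|u\|_t$. Multiplying by $e^{\alpha s}$ and integrating over $[0,t]$ (Gr\"onwall/comparison lemma) yields
\[
V(x(t))\le e^{-\alpha t}V(x(0))+\frac{1}{\alpha}\gamma(\|u\|_t).
\]

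\emph{Step 2: rule out finite escape.} The bound in Step 1 keeps $V(x(t))$, and hence $\|x(t)\|\le\alpha_1^{-1}(V(x(t)))$, bounded on every finite interval. Therefore $T=\infty$ and the estimate holds for all $t\ge0$.

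\emph{Step 3: convert back to norms.} Using a), $\|x(t)\|\le\alpha_1^{-1}(V(x(t)))$ and $V(x(0))\le\alpha_2(\|x(0)\|)$. Combine this with the weak triangle inequality for a class $\mathcal K_\infty$ function, $\alpha_1^{-1}(a+b)\le\alpha_1^{-1}(2a)+\alpha_1^{-1}(2b)$, which holds because $a+b\le 2\max(a,b)$. This gives
\[
\|x(t)\|\le \alpha_1^{-1}\!\big(2e^{-\alpha t}\alpha_2(\|x(0)\|)\big)+\alpha_1^{-1}\!\big(\tfrac{2}{\alpha}\gamma(\|u\|_t)\big).
\]
Define
\[
\beta(r,t):=\alpha_1^{-1}\big(2e^{-\alpha t}\alpha_2(r)\big), \qquad \tilde\gamma(s):=\alpha_1^{-1}\big(\tfrac{2}{\alpha}\gamma(s)\big).
\]
Then $\beta$ is class $\mathcal K$ in $r$ and decreases to $0$ in $t$, so $\beta\in\mathcal{KL}$. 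Also $\tilde\gamma\in\mathcal K$ as a composition of class $\mathcal K$ functions, which gives \eqref{ISS-Lemma}.

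The only delicate point is Step 1 when $\sup\|u\|=\infty$ on $[0,t]$. In that case the right-hand side of \eqref{ISS-Lemma} is infinite, so the bound holds trivially. The paper's $\sup$ is taken over $[0,t]$, so the causal form used above suffices. I expect no genuine obstacle; the step needing most care is justifying global existence (Step 2) before invoking the integrated bound for all $t$.
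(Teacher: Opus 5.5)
Your proof is correct, though the route differs from the paper's. The paper gives no argument of its own for this lemma. It only cites Sontag, where the ISS--Lyapunov theorem covers general dissipation rates: $\dot V\le-\alpha_3(\|x\|)+\gamma(\|u\|)$, or the implication form $\|x\|\ge\rho(\|u\|)\Rightarrow\dot V\le-\alpha_3(\|x\|)$. That proof uses forward invariance of a sublevel set of $V$ determined by the input size, plus a comparison lemma for the scalar ODE $\dot y=-\alpha_3\circ\alpha_2^{-1}(y)$, and the resulting $\mathcal{KL}$ function is usually not explicit.

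Because hypothesis b) has the linear rate $-\alpha V$, you bypass that machinery. Variation of constants gives $V(x(t))\le e^{-\alpha t}V(x(0))+\gamma(\|u\|_t)/\alpha$ directly. The weak triangle inequality for $\alpha_1^{-1}$ then yields the explicit, jointly continuous $\beta(r,t)=\alpha_1^{-1}(2e^{-\alpha t}\alpha_2(r))$ and $\tilde\gamma=\alpha_1^{-1}(2\gamma/\alpha)$. Your route is elementary and constructive, with exponential decay at the level of $V$. The cited general argument is what one needs when the decay is only $-\alpha_3(\|x\|)$.

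Two minor points of rigor:
\begin{enumerate}
\item The integrated bound in Step 1 should first be stated on $[0,\min\{t,T\})$. Then use it, with $\|u\|_T<\infty$ (locally essentially bounded inputs), to exclude $T<\infty$.
\item Your closing remark that the right-hand side is infinite when $\|u\|_t=\infty$ is not true in general. Since $\gamma$ is only class $\mathcal K$, it may be bounded, and then $\tilde\gamma$ is bounded too. That case lies outside the standard input class anyway. Where a solution does exist, $\gamma(\|u(s)\|)\le\sup\gamma$ still gives the estimate with $\tilde\gamma$ replaced by its limit at infinity.
\end{enumerate}
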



\section{Main Results}
This section presents the main contributions of the paper, focusing on the development of a novel SIDT algorithm.
The algorithm addresses both distributed convex and nonconvex optimization problems (\ref{Cx}) in NNS (\ref{system}) with input delays.

\subsection{SIDT algorithm design}
To achieve IDTSC, we propose a novel SIDT algorithm designed to solve distributed optimization problems (\ref{Cx}) in delay-affected NNSs (\ref{system}).
The control input for each node is formulated as:
\begin{align}\label{SIDT}
u_i (t - d)
=& g_i(x_i(t - d))^{-1}(\bar{u}_i(t - d) - f_i(x_i(t - d))),
\end{align}
where the system terms $f_i$ and $g_i$ are known, and ${\bar u}_i(t-d)$ is an auxiliary control term given by:
\begin{align*}
\bar{u}_i (t - d) =& \vartheta \sum_{j = 1}^n a_{ij} (x_j(t - d) - x_i(t - d))\notag\\
& + \varepsilon \sum_{j \in {\mathcal N}_i} \big[ \wp_{ij}(t - d) \cdot \textrm{sign} (x_j(t - d) - x_i(t - d))\big] + \bar u_{\eta,i}(t - d),
\end{align*}
and
\begin{align*}
\bar u_{\eta,i}(t - d) =  - k_{0} \eta_i(x_i (t - d)).
\end{align*}
The terms are specified as follows: $ \vartheta  > 0$ is a gain parameter,
$\wp_{ij}(t - d) = \|\bar u_{\eta,i}(t - d)\| + \|\bar u_{\eta,j}(t - d)\|$,
$\eta_i(x_i (t - d)) = \nabla \phi_i(x_i(t - d))$,
$k_{0} > 0$ is a scaling parameter for the gradient-based term,
and $\varepsilon$ is a control gain defined later.
In scenarios where there are no input delays \( d = 0 \), the SIDT algorithm (\ref{SIDT}) simplifies to:
\begin{align}\label{SIDT_t}
u_i (t)
=  g_i(x_i (t))^{-1}(-f_i(x_i (t)) + {\bar u}_i(t)),
\end{align}
where
\begin{align*}
{\bar u}_i (t) = & \vartheta \sum_{j = 1}^n a_{ij}(x_j (t) - x_i (t)) + \varepsilon \sum_{j \in {\mathcal N}_i} \big[\wp_{ij}(t) \cdot \textrm{sign}(x_j(t) - x_i(t))\big] + \bar u_{\eta,i}, \\
\bar u_{\eta,i} = &- k_{0} \eta_i(x_i (t)),\\
\end{align*}
and
\begin{align*}
\eta_i(x_i (t)) = & \nabla \phi_i (x_i (t)).
\end{align*}

\subsection{SIDT algorithm for distributed convex optimization problem}
In this subsection, we analyze the application of the SIDT algorithm (\ref{SIDT}) to solve the distributed convex optimization problem (\ref{Cx}) in NNSs (\ref{system}) with input delays.
The analysis leads to the following result:

\begin{theoremx}\label{T1}
Assuming Assumptions 1-3 are satisfied, if $\varepsilon \geq 2n$ holds, then the application of the SIDT algorithm (\ref{SIDT}) to the distributed optimization problem (\ref{Cx}) of the NNS (\ref{system}) with input delays guarantees that the system exhibits practical IDTSC.
\end{theoremx}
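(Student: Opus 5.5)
Under the SIDT law (\ref{SIDT}), the known terms $f_i$ and $g_i$ cancel only at the delayed stamp. The closed loop therefore reads
\begin{align*}
\dot x_i(t)=\bar u_i(t-d)+\big[f_i(x_i(t))-f_i(x_i(t-d))\big]+\big[g_i(x_i(t))-g_i(x_i(t-d))\big]u_i(t-d).
\end{align*}
The plan is to treat this as the delay-free hierarchical design (\ref{SIDT_t}) driven by a mismatch input $\delta_i(t)=\bar u_i(t-d)-\bar u_i(t)+[f_i(x_i(t))-f_i(x_i(t-d))]+[g_i(x_i(t))-g_i(x_i(t-d))]u_i(t-d)$. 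The argument then has three parts: (a) an ISS property of the nominal closed loop with respect to $\delta=\mathrm{col}(\delta_i)$, (b) a bound on $\delta$ proportional to $d$ on any compact set, and (c) a semiglobal, Razumikhin-type closing argument that links the radius $r$ to $\bar d(r)$. This yields the ``practical'' version of IDTSC: the trajectories converge to an arbitrarily small neighbourhood of $x^*$, which shrinks as $d\to0$.

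\textbf{Step (a), nominal two-layer analysis.} I decompose $x=\mathbf 1_n\otimes\bar x+\tilde x$, with $\bar x=\frac1n\sum_i x_i$ and disagreement $\tilde x$. For the disagreement I use $V_1=\frac12\tilde x^T(\mathcal L\otimes\mathbf I_m)\tilde x$, or simply $\frac12\|\tilde x\|^2$, together with the nonsmooth sign term. The gradient contributions $\bar u_{\eta,i}$ are dominated by $\varepsilon\wp_{ij}$ whenever $\varepsilon\ge 2n$, since each $\|\bar u_{\eta,i}\|$ appears at most $n$ times and the sign coupling over a connected graph gives $-\varepsilon\sum_{(i,j)}\wp_{ij}\|x_i-x_j\|_1$. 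Together with the $\vartheta$-Laplacian term this gives
\begin{align*}
\dot V_1\le -\vartheta\lambda_2\|\tilde x\|^2+\|\tilde x\|\,\|\delta\|,
\end{align*}
i.e.\ ISS of the consensus error. For the average I use $V_2=\sum_i\phi_i(\bar x)-\Phi^*$. Its derivative is $\dot{\bar x}=-\frac{k_0}{n}\sum_i\nabla\phi_i(x_i)+\frac1n\sum_i\delta_i$, because the Laplacian and antisymmetric sign terms sum to zero. Assumptions 2--3 (Lipschitz gradients and strong convexity) then give
\begin{align*}
\dot V_2\le -c_1V_2+c_2\|\tilde x\|^2+c_3\|\delta\|^2 .
\end{align*}
A cascade or small-gain combination $V=V_2+\kappa V_1$ with $\kappa$ large satisfies Lemma~\ref{lemma7}(b), so $e=x-x^*$ is ISS with respect to $\delta$. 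The gains are independent of $d$.

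\textbf{Step (b), smallness of the mismatch.} On the sublevel set $\Omega_c=\{V\le c\}$, with $c$ chosen from $r$ so that $\mathcal B_r(x_i^*)\subset\Omega_{c/2}$, Lemmas~\ref{lemma2}--\ref{lemma3} give smooth bounds of the form $\|f_i(x_i(t))-f_i(x_i(t-d))\|\le\Psi\,\|x_i(t)-x_i(t-d)\|$, with analogous bounds for $g_i$, $\nabla\phi_i$ and the Laplacian term. Here $\Psi$ is bounded by a constant $M(c)$. Moreover $\|x_i(t)-x_i(t-d)\|\le\int_{t-d}^t\|\dot x_i\|\le d\,M'(c)$ while the trajectory stays in $\Omega_c$, so $\|\delta\|\le d\,K(c)$. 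The sign term is discontinuous, and its mismatch does not vanish with $d$; I would bound it by $2\varepsilon\sum_j\wp_{ij}$ evaluated at delayed versus current states. This is the source of the ``practical'' qualifier. Alternatively, I would argue that once consensus has been reached the differences $x_j-x_i$ are small, so this contribution is $O(d)$ up to chattering, which is absorbed into the ultimate bound.

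\textbf{Step (c), closing the loop; this is the main obstacle.} The a priori bound in (b) holds only while the trajectory remains in $\Omega_c$, so invariance must be shown first. I would use Lemma~\ref{lemma6}: whenever $V(x(t+\theta))\le p(V(x(t)))$ with $p(s)=2s$, the past states also lie in $\Omega_{2c}$. This gives $\dot V\le -\alpha V+\gamma(dK(2c))$. Choosing $\bar d(r)$ so that $\gamma(\bar dK(2c))<\alpha c/2$ makes $\Omega_c$ forward invariant, and the ISS estimate (\ref{ISS-Lemma}) then yields the ultimate bound $\limsup\|x-x^*\|\le\tilde\gamma(dK(2c))$, which tends to $0$ as $d\to0$. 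Local stability follows from the local exponential behaviour of the nominal loop at $x^*$. The delicate points are making the constants $K$ and $M$ genuinely independent of $d$, and handling the discontinuous sign mismatch in Filippov sense. I expect this step to be the hardest.
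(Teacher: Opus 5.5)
\textbf{The main gap is in Steps (b)--(c).} In (b) you correctly observe that the sign-channel mismatch does not vanish with $d$. Step (c) nevertheless runs on $\|\delta\|\le dK(2c)$, both to make $\Omega_c$ forward invariant and to conclude that the ultimate bound $\tilde\gamma(dK(2c))$ tends to zero as $d\to0$. Neither conclusion follows.

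\begin{itemize}
\item The entries of $\mathrm{sign}(x_j(t-d)-x_i(t-d))-\mathrm{sign}(x_j(t)-x_i(t))$ can be $\pm2$ however small $x_j-x_i$ is.
\item $\wp_{ij}$ does not vanish near $x^*$, because only $\sum_i\nabla\phi_i(x^*)=0$, not each term. For the costs of Example 1 the individual gradients at $x^*$ are $-2,-1,0,1,2$.
\end{itemize}

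So your fallback (``once consensus is reached the contribution is $O(d)$'') fails exactly near $x^*$. There $\|\delta\|$ is not $O(d)$, and the ISS estimate only yields an $O(1)$ ball.

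The paper claims no more than this allows. It splits $\xi=\xi^{\mathrm{sm}}+\xi^{\mathrm{sgn}}$ and carries the jump as $b_2=2\varepsilon\bar g_r\Delta_G\bar\wp_r$ in \eqref{S2-1}. This becomes the $d$-independent residual $\bar a_3$ inside $C_s(d)$. The paper then concludes only \eqref{dVnd-s31}, a neighbourhood that does not shrink to $x^*$ as $d\to0$; that is the sense of ``practical'' in the statement.

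Keeping this residual has two consequences for your Step (c):
\begin{itemize}
\item Forward invariance of $\Omega_c$ can no longer be bought by shrinking $\bar d$. It becomes a condition relating $c$ to the gains, and $\bar\wp$ grows with $c$.
\item Local exponential stability of the nominal loop no longer transfers to local stability of $x^*$ under delay.
\end{itemize}

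If you want an ultimate bound that really is $O(d)$, do not push the sign channel into a norm-bounded input. Use its structure instead:
\begin{itemize}
\item In $\dot{\bar x}$ the delayed sign terms still cancel exactly, since $\wp_{ij}=\wp_{ji}$, the sign function is odd, and the graph is undirected.
\item In $\frac{d}{dt}\frac12\|\tilde x\|^2$ each component of the mismatch multiplies $x_j^k(t)-x_i^k(t)$. This factor is at most $d\sup_{[t-d,t]}|\dot x_j^k-\dot x_i^k|$ whenever that sign changed between $t-d$ and $t$.
\end{itemize}

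\textbf{Comparison of routes.} The paper proves GALE of the delay-free loop as a cascade: first $V_{pre}$, then $\Phi(x)-\Phi(x^*)$ with $s(t)$ as a decaying input. In Step 2 it works with $V_1=\Phi(x)-\Phi(x^*)$ and a Razumikhin comparison with $p(s)=3s$. You instead build one strict ISS-Lyapunov function from the average and the disagreement. That buys something real: $\sum_i\phi_i(\bar x)-\Phi^*$ is nonnegative by construction, whereas $\Phi(x)-\Phi(x^*)$ is not sign-definite off the consensus subspace.

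\textbf{Domination step in (a).} Your claim that ``each $\|\bar u_{\eta,i}\|$ appears at most $n$ times'' is the paper's estimate \eqref{Iterm_3}. So this is not where you depart from the paper, but it does not hold as written. $\|\tilde x_i\|$ is controlled by disagreements on edges not incident to $i$, while the sign gain on edge $(k,l)$ contains only $\|\bar u_{\eta,k}\|+\|\bar u_{\eta,l}\|$. A concrete failure: take the path $1$--$2$--$3$--$4$ with $\phi_1=(x-5)^2$, $\phi_2=\phi_3=x^2$, $\phi_4=(x+5)^2$. Then $x^*=0$ and $\wp_{23}=0$ at $x^*$. No Filippov value of the sign terms makes $x^*$ an equilibrium, even without delay. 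The nominal ISS property you rely on therefore needs a sign gain on every edge that dominates all gradients, not just those at the edge's endpoints.
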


\begin{proof}
The proof proceeds in two main steps.

\textbf{Step 1:} Establish that the NNS (\ref{system}) without the input delay $d$, utilizing the SIDT algorithm (\ref{SIDT_t}), achieves global asymptotic and local exponential stability at the optimal solution of the distributed optimization problem (\ref{Cx}).
\textbf{Step 2:} Demonstrate that the SIDT algorithm (\ref{SIDT}) ensures practical IDTSC in the presence of input delay $d$.

\textbf{\textrm{Step 1 (Stability without input delay):}}
Consider the NNS (\ref{system}) without input delay, formulated as:
\begin{align}\label{system_nd}
\dot x_{i} (t) = f_{i}(x_i(t)) + g_i (x_i(t)) u_i(t), \quad i = 1,\ldots,n.
\end{align}
Substituting the control input $u_i(t)$ from (\ref{SIDT_t}) into (\ref{system_nd}):
\begin{align}\label{closed-sys}
\dot x_{i} (t) = f_{i}(x_i(t)) + g_i (x_i(t)) \xi_i(x_i(t)).
\end{align}
Construct the following Lyapunov function candidate as follows:
\begin{align}\label{V_t1}
V_{pre} =\frac{1}{2} \sum_{i = 1}^n e_{x_i}^T e_{x_i},
\end{align}
where $e_{x_i} = x_i - \frac{1}{n} \sum_{j = 1}^n x_j$.
The derivative of (\ref{V_t1}) along the trajectories of (\ref{closed-sys}) yields:
\begin{align}\label{V_t12}
\dot V_{pre} =& \sum_{i = 1}^n e_{x_i}^T \cdot \dot e_{x_i} \notag\\
= &\sum_{i = 1}^n e_{x_i}^T (  \dot x_i - \frac{1}{n} \sum_{j = 1}^n \dot x_j).
\end{align}
Since $\sum_{i = 1}^n e_{x_i} = 0$,
it follows that:
$$- \frac{1}{n} \sum_{i = 1}^n e_{x_i} \cdot (\sum_{j = 1}^n \dot x_j) = 0,$$
implying that:
\begin{align}\label{V_t13}
\dot V_{pre} =& \sum_{i = 1}^n e_{x_i}^T \dot x_i  \notag\\
= & -  \vartheta  \sum_{i = 1}^n\sum_{j = 1}^n  e_{x_i}^T a_{ij}(x_i (t) - x_j (t)) - \varepsilon \sum_{i = 1}^n \sum_{j \in {\mathcal N}_i}  e_{x_i}^T \wp_{ij}(t) \textrm{sign}(x_i(t) - x_j(t)) \notag\\
& + \sum_{i = 1}^n e_{x_i}^T \bar u_{\eta,i}.
\end{align}

To analyze the term $ I_1 = -  \vartheta  \sum_{i = 1}^n e_{x_i}^T \sum_{j = 1}^n a_{ij}(x_i (t) - x_j (t)) $ in (\ref{V_t13}),
one has:
\begin{align}\label{Iterm_1}
I_1
=& - \frac{\vartheta}{2} \sum_{i = 1}^n \sum_{j = 1}^n a_{ij} e_{x_i}^T (x_i - x_j) - \frac{\vartheta}{2} \sum_{i = 1}^n \sum_{j = 1}^n a_{ij} e_{x_j}^T (x_j - x_i) \notag\\
=& - \frac{\vartheta}{2} \sum_{i = 1}^n \sum_{j = 1}^n a_{ij} (x_i - x_j)^T (x_i - x_j)\notag\\
=& - \vartheta e_x^T {\bar {\mathcal L}} e_x \notag\\
\leq & -2\vartheta \lambda_2({\bar {\mathcal L}}) V_{pre},
\end{align}
where $e_x = [e_{x_1},e_{x_2},\ldots,e_{x_n}]^T$, ${\bar {\mathcal L}} = {\mathcal L} \otimes \textbf{I}_m$, and \(\lambda_{2}(\bar {\mathcal L})\) denotes the smallest non-zero eigenvalue of $\bar {\mathcal L}$.

For $ I_2 = - \varepsilon \sum_{i = 1}^n\sum_{j \in {\mathcal N}_i}  e_{x_i}^T \wp_{ij}(t) (x_i(t) - x_j(t)) $,
it yields that:
\begin{align}\label{Iterm_2}
I_2 =& - \frac{\varepsilon}{2} \sum_{i = 1}^n \sum_{j \in {\mathcal N}_i} e_{x_i}^T \wp_{ij} \textrm{sign} (x_i - x_j) - \frac{\varepsilon}{2} \sum_{i = 1}^n \sum_{j \in {\mathcal N}_i} e_{x_j}^T \wp_{ji} \textrm{sign} (x_j - x_i) \notag\\
=& - \frac{\varepsilon}{2} \sum_{i = 1}^n \sum_{j \in {\mathcal N}_i} \wp_{ij} (x_i - x_j)^T \textrm{sign} (x_i - x_j)\notag\\
\leq& - \frac{\varepsilon}{2} \sum_{i = 1}^n \sum_{j \in {\mathcal N}_i} \|x_i - x_j\| \| \bar u_{\eta,i} \|.
\end{align}

To analyze the term $ I_3 = - \sum_{i = 1}^n e_{x_i}^T \bar u_{\eta,i}$,
note that:
$$\max_{i,j \in {\mathcal V}} \| x_{i}(t) - x_{j}(t) \| \leq \sum_{i = 1}^n \sum_{j \in {\mathcal N}_i} \| x_{i} - x_{j} \|,$$
which implies that:
\begin{align}\label{Iterm_3}
I_3 \leq& \sum_{i = 1}^n \|e_{x_i}\| \|\bar u_{\eta,i}\| \notag\\
\leq& \sum_{i = 1}^n \left(\sum_{i = 1}^n \sum_{j \in {\mathcal N}_i} \| x_{i} - x_{j} \| \right)\|\bar u_{\eta,i}\| \notag\\
\leq& n \sum_{i = 1}^n \sum_{j \in {\mathcal N}_i} \| x_{i} - x_{j} \| \|\bar u_{\eta,i}\|.
\end{align}

According to the condition $\varepsilon \geq 2n$,
summing up equations (\ref{Iterm_1})-(\ref{Iterm_3}) and substituting them into (\ref{V_t13}) yields:
\begin{align}\label{V_t14}
\dot V_{pre} \leq& -2\vartheta \lambda_2(\bar {\mathcal L}) V_{pre}.
\end{align}
From \eqref{V_t14},
together with the quadratic bounds $c_1\|e_x(t)\|^2 \le V_{pre}(t) \le c_2\|e_x(t)\|^2$ (for some $c_1$, $c_2>0$), we obtain that $e_{x_i}$ globally converges exponentially to origin,
namely, $\lim_{t \to \infty} \| x_i(t) - \frac{1}{n} \sum_{j = 1}^n x_j \| = 0$, $\forall i \in {\mathcal V}$.

Since $\lim_{t \to \infty} \| x_i(t) - \frac{1}{n} \sum_{j = 1}^n x_j \| = 0$, $\forall i \in {\mathcal V}$, it follows that:
\begin{equation}\label{V_t15}
x_i(t) = x_j(t), \quad \forall i,j \in {\mathcal V},\quad \text{as } ~ t \to \infty.
\end{equation}
Equivalently, the state converges to the consensus subspace:
$$
\mathcal C := \{x \in \mathbb R^{nm}: x_i = x_j, \forall i,j \in \mathcal V\}.
$$
Let $e_x(t) = Px(t)$, where $P = P_n \otimes \textbf{I}_m$ and $P_n = \textbf{I}_n-\tfrac{1}{n}{\mathbf 1}_n {\mathbf 1}_n^T$.
Note that the disagreement dynamics evolve in the orthogonal complement of $\mathrm{span} \{ {\mathbf 1}_n \}\otimes \mathbb R^m$.
In particular,
\[
P(\mathbf 1_n\otimes y)=0,~ \forall y\in\mathbb R^m,
\quad
(\mathbf 1_n^T \otimes \textbf{I}_m) ( \mathcal L \otimes \textbf{I}_m)=0,
\]
so any auxiliary input that acts through the consensus channel (i.e., takes values in
$\mathrm{span}\{\mathbf 1_n\}\otimes \mathbb R^m$, including the term
$\bar u_{\eta,i}(t-d)$ as designed in \eqref{SIDT}-\eqref{SIDT_t}) is annihilated by $P$ and does not enter the $e_x$-subsystem.
Therefore, \eqref{V_t14} guarantees exponential decay of the disagreement independently of the specific variation of $\bar u_{\eta,i}(t-d)$, and the closed-loop trajectories necessarily enter (and remain in) $\mathcal C$.

Then, select a new Lyapunov function candidate as follows:
\begin{align}\label{V_nd1}
V_{1} = \Phi (x(t)) - \Phi (x^*(t)),
\end{align}
where $x^*(t) \in \mathbb R^{nm}$ is the optimal solution of the distributed optimization problem (\ref{Cx}).
According to Assumption 2, the aggregate cost function $\Phi (x(t))$ is $L_{\Phi}$-Lipschitz continuous in its gradient:
\begin{align}\label{LpC}
\| \nabla \Phi (x) - \nabla \Phi (y) \| \leq L_{\Phi} \| x - y \| , \quad \forall x,y \in \mathbb R^{nm},
\end{align}
where $L_{\Phi} = \max_{i =1,\ldots,n} \{ L_{\phi_i}\}$, $\forall i \in {\mathcal V}$.

Using the first-order Taylor expansion for $\Phi$:
\begin{align}\label{Lp-1}
\Phi (y) \leq& \Phi (x) + \nabla \Phi (x)^T (y - x) + \int^1_0 (\nabla \Phi (x + t(y - x)) - \nabla \Phi (x))^T (y - x) dt.
\end{align}
The $L_{\Phi}$-Lipschitz continuity of $\nabla \Phi \left( x \right)$ (\ref{LpC}) allows us to bound the integral term:
\begin{align}\label{Lp-2}
\int^1_0 \| \nabla \Phi (x + t(y - x)) - \nabla \Phi (x) \| dt \leq&  \int^1_0 L_{\Phi} t \| y - x\| dt \notag\\
 = & \frac{L_{\Phi}}{2} \| y - x\|^2.
\end{align}
Substituting back, we get:
\begin{align}\label{Lp-3}
\int^1_0 (\nabla \Phi (x + t(y - x)) - \nabla \Phi (x))^T (y - x) dt \leq& \frac{L_{\Phi}}{2} \| y - x\|^2.
\end{align}
Substituting (\ref{Lp-3}) into (\ref{Lp-1}),
it obtains that:
\begin{align}\label{Lp-4}
\Phi (y) \leq& \Phi (x) + \nabla \Phi (x)^T (y - x) + \frac{L_{\Phi}}{2} \| y - x\|^2 .
\end{align}
Setting $x = x^*$ and noting $\nabla \Phi (x^*) = 0$, we get:
\begin{align}\label{Lp-5}
\Phi (x) \leq& \Phi (x^*) + \frac{L_{\Phi}}{2} \| x - x^*\|^2.
\end{align}
This provides an upper bound:
\begin{align}\label{V_ndcondition1}
V_{nd} (t) = \Phi (x(t)) - \Phi (x^*(t)) \leq& \frac{L_{\Phi}}{2} \| x - x^*\|^2.
\end{align}

Define an auxiliary function as:
\begin{align}\label{Afunction}
\lambda(\theta) = \Phi ((1 - \theta)x +  \theta y ), \quad \forall x,y \in \mathbb R^{nm},
\end{align}
where $\theta$ is defined in Assumption 3, and $\lambda(\theta)$ is differentiable on the interval $\theta \in [0,1]$.
According to Assumption 3:
\begin{align}\label{Afunction1}
\lambda(\theta) \leq (1 - \theta) \Phi(x) + \theta \Phi(y) - \frac{m_{\Phi}}{2} \theta (1 - \theta) \|y - x \|^2.
\end{align}
Take the derivative of left-hand sides of (\ref{Afunction1}) with respect to $\theta$:
\begin{align}\label{dAfunction1-l}
\lambda'(\theta) = \nabla \Phi((1 - \theta)x + \theta y )^T (y - x).
\end{align}
At $\theta = 0$, we have:
\begin{align}\label{dAfunction1-l1}
\lambda'(0) = \nabla \Phi(x)^T (y - x).
\end{align}
For the right-hand side of (\ref{Afunction1}), taking the derivative gives:
\begin{align*}
\lambda'_r(\theta) =& - \Phi(x) + \Phi(y) - \frac{{m_{\Phi}}}{2} [(1-2\theta) \|y - x \|^2 - 2\theta(1 - \theta)\|y - x \|^2],
\end{align*}
where $\lambda_r(\theta) = (1 - \theta) \Phi(x) + \theta \Phi(y) - [{m_{\Phi}}\theta (1 - \theta) \|y - x \|^2]/{2} $.
At $\theta = 0$, this becomes:
\begin{align}\label{Afunction1-r}
\lambda'_r(0) =& - \Phi(x) + \Phi(y) - \frac{{m_{\Phi}}}{2}  \|y - x \|^2.
\end{align}
Since $\lambda(\theta) \leq \lambda_r(\theta) $, at $\theta = 0$, it follows: $\lambda'(0) \leq \lambda'_r(0) $.
By substituting (\ref{dAfunction1-l1}) and (\ref{Afunction1-r}), we have:
\begin{align*}
\nabla \Phi(x)^T (y - x) \leq - \Phi(x) + \Phi(y) - \frac{{m_{\Phi}}}{2}  \|y - x \|^2,
\end{align*}
which can further derives that:
\begin{align}\label{convex-condition}
\Phi(y) \geq \Phi(x) + \nabla \Phi(x)^T (y - x) + \frac{{m_{\Phi}}}{2}  \|y - x \|^2.
\end{align}
Similarly to (\ref{Lp-4})-(\ref{Lp-5}),
setting $x = x^*$ in equation (\ref{convex-condition}), the inequality transforms into the following form:
\begin{align}\label{convex-condition1}
\Phi (x) \geq & \Phi (x^*) + \frac{m_{\Phi}}{2} \| x - x^*\|^2.
\end{align}
By combining this result (\ref{convex-condition1}) with equation (\ref{V_ndcondition1}):
\begin{align}\label{V_ndcondition2}
\frac{m_{\Phi}}{2} \| x - x^*\|^2 \leq V_{1} (t) \leq \frac{L_{\Phi}}{2} \| x - x^*\|^2.
\end{align}

Let:
$$s_i = \vartheta \sum_{j = 1}^n a_{ij}(x_j (t) - x_i (t))  + \varepsilon \sum_{j \in {\mathcal N}_i} [\wp_{ij}(t) \textrm{sign}(x_j(t) - x_i(t))].$$
The derivative of (\ref{V_nd1}) along the trajectories of (\ref{closed-sys}) is given by:
\begin{align}\label{dV_nd1}
\dot V_{1} &= \frac{d}{dt}(\Phi (x(t)) - \Phi (x^*(t))) \notag\\
&= - k_{0} \sum_{i = 1}^n \eta_i^T(x_i (t)) \eta_i(x_i (t)) + \sum_{i = 1}^n \eta_i^T(x_i (t)) s_i(t) \notag\\
&\leq - {k_{0}} (\nabla^T \Phi(x) \nabla \Phi(x) ) + \sum_{i = 1}^n \eta_i^T(x_i (t)) s_i(t).
\end{align}
Based on Assumption 3 and (\ref{convex-condition}),
setting $y = x^*$,
we have:
\begin{align}\label{convex-condition2}
\Phi(x) - \Phi(x^*) \geq \nabla \Phi(x)^{\mathrm{T}} (x - x^*) + \frac{m_{\Phi}}{2} \| x - x^* \|^2.
\end{align}
By discarding the non-negative term ${m_{\Phi}} \| x - x^* \|^2/2$,
this reduces to:
\begin{align}\label{convex-condition3}
\Phi(x) - \Phi(x^*) \geq \nabla \Phi(x)^{\mathrm{T}} (x - x^*).
\end{align}
Using the Cauchy-Schwarz inequality, (\ref{convex-condition3}) can be further derived as:
\[
\Phi(x) - \Phi(x^*) \geq -\| \nabla \Phi(x) \| \| x - x^* \|.
\]

Taking the derivative of both sides of equation (\ref{convex-condition1}), we get:
\begin{align}\label{convex-condition4}
\| \nabla \Phi(x) \| \geq m_{\Phi} \| x - x^* \|.
\end{align}
Squaring both sides, it yields:
\[
\| \nabla \Phi(x) \|^2 \geq m_{\Phi}^2 \| x - x^* \|^2.
\]
Substituting $\| x - x^*\|^2 \geq {2}(\Phi (x) - \Phi (x^*))/{L_{\Phi}} $ from (\ref{Lp-5}),
one has:
\begin{align*}
\nabla^T \Phi(x) \nabla \Phi(x) \geq \kappa_{\Phi} (\Phi(x) - \Phi(x^*)),
\end{align*}
where $\kappa_{\Phi} = {2 m_{\Phi}^2}/{L_{\Phi}}$.
Substituting into equation (\ref{convex-condition3}) obtains:
\begin{align}\label{dV_nd2}
\dot V_{1} &\leq - k_{0} \kappa_{\Phi} (\Phi(x) - \Phi(x^*)) + \sum_{i = 1}^n \eta_i(x_i (t)) s_i(t)\notag\\
&\leq - \beta_{\Phi} V_{1} + \sum_{i = 1}^n \eta_i^T(x_i (t)) s_i(t),
\end{align}
where $\beta_{\Phi} = k_{0} \kappa_{\Phi} > 0$.

Since $e_{x_i}$ is globally exponential convergence,
$s_i(t)$ also converges exponentially, namely, $\| s(t) \| \leq \| s(0) \| e^{-\alpha_s t}$,
with $s(t) = [s_1(t),s_2(t),\ldots,s_n(t)]^T$ and $\alpha_s = 2\vartheta_1 \lambda_2(\bar {\mathcal L})$.
Based on Assumption 2, there exist a positive constant $\bar \eta$ satisfying that $\bar \eta \geq \| \nabla \Phi(x) \|$.
Using the Cauchy-Schwarz inequality,
\begin{align}\label{ISS-1}
\sum_{i = 1}^n \eta_i^T(x_i (t)) s_i(t) \leq& \| \nabla \Phi(x) \| \| s(t) \|  \leq \bar \eta  \| s(t) \|.
\end{align}
Substituting (\ref{ISS-1}) into (\ref{dV_nd2}) obtains:
\begin{align}\label{dV_nd3}
\dot V_{1} \leq& - \beta_{\Phi} V_{1} + \bar \eta  \| s(t) \| \notag\\
= & - \beta_{\Phi} V_{1} + \gamma_{s}(\| s(t) \|),
\end{align}
where $\gamma_{s}(\| s(t) \|) = \bar \eta  \| s(t) \|$ is obviously a class \( \mathcal{K} \) function.
From Lemma \ref{lemma7}, and the form of (\ref{V_ndcondition2}) and (\ref{dV_nd3}),
it can be concluded that the system (\ref{closed-sys}) is ISS with respect to input $s(t)$.
Using the ISS representation:
\begin{align*}
\|x(t) - x^*\| \leq \beta_{ISS}(\|x(0) - x^*\|,t ) + \gamma_{s} (\sup_{0 \leq \tau \leq t} \|s(\tau) \| ),
\end{align*}
and substituting $\| s(t) \| \leq \| s(0) \| e^{-\alpha_s t}$, $\forall t \geq 0$, we have:
\begin{align*}
\|x(t) - x^*\| \leq \beta_{ISS}(\|x(0) - x^*\|,t ) + \gamma_{s} (\| s(0) \| e^{-\alpha_s t}).
\end{align*}
Due to $\beta_{ISS} \in \mathcal{KL}$,
it satisfies that $\beta_{ISS}(\sigma,t ) \rightarrow 0$ when $t \rightarrow \infty$ for all $\sigma > 0$.
Additionally, $\gamma_{s} (\| s(0) \| e^{-\alpha_s t}) \rightarrow 0$ with $t \rightarrow \infty$.
Thus, it demonstrated that $\lim_{t \rightarrow \infty} \| x(t) - x^* \| = 0 $ as well as the positive definiteness and boundedness of $V_1$,
indicating that the system is globally asymptotically stable on $x^*$.

In the neighborhood of $x^*$, the influence of $s(t)$ for $x(t)$ tends to $0$,
implying that the input item $s(t)$ can be ignored.
Therefore, it yields:
\begin{align}\label{dV_nd4}
\dot V_{1} \leq& - \beta_{\Phi} V_{1}.
\end{align}
Using (\ref{dV_nd4}) and (\ref{V_ndcondition2}),
it follows that $x^*$, $x(t)$ exponentially converge to $x^*$ locally.
Thus, combining the analysis of (\ref{dV_nd1})-(\ref{dV_nd3}) and (\ref{dV_nd4}),
the closed-loop system (\ref{closed-sys}) is GALE stability at the optimal solution point $x^*$.

\textbf{\textrm{Step 2 (Stability with input delay):}}
Before proceeding, we clarify how the discontinuous sign-based channel is treated in the delay analysis. The SIDT algorithm \eqref{SIDT} contains the ideal sign function, and hence the resulting closed-loop vector field is generally not locally Lipschitz on the switching surfaces. Therefore, the following proof does not invoke the classical Razumikhin theorem by requiring the whole closed-loop functional to be locally Lipschitz. Instead, we use a Razumikhin-type comparison argument together with a decomposition of the feedback into a locally Lipschitz part and a uniformly bounded discontinuous part. More precisely, the delayed feedback mismatch is estimated by a Lipschitz term associated with the smooth channel and a bounded residual term induced by the sign-based channel. 

Building on the results of Lemma 8.1 in \cite{LinWei2020}, we establish that there exists a threshold \(\bar{d}_{c1}\) which depends on the initial domain radius \(r\). Specifically, for any delay \(d \in (0, \bar{d}_{c1}]\), all solution trajectories of the time-delay nonlinear system (\ref{system}), initialized with \( x_i(s) \) satisfying \( \sup_{s \in [-d, 0]} \| x_i(s) \| \leq r \), remain well-defined and do not experience finite escape time over the interval \([0, d]\). Furthermore, these trajectories are bounded by \( 2r \), where \( r \) is an arbitrarily large positive real number.

Using the Cauchy-Schwarz inequality,
consider a Lyapunov function candidate $V_{1} $ as (\ref{V_nd1}).
Define the level set $\Omega_{0,V_{1}} = \{x \in \mathbb R^n | V_{1}(x) \leq r_{0,V_{1}}\}$,
where $r_{0,V_{1}}$ is a positive constant satisfying:
$$r_{0,V_{1}} = \frac{L_{\Phi}}{2} \max(\| 2\sqrt{n}r - x^*\|, \| -2\sqrt{n}r - x^* \|)^2.$$
This result suggests that, for sufficiently small delays \( d > 0 \), the system's state $x_i(t)$, $\forall t \in [0, d]$ remains bounded and the solutions do not diverge within this interval.

Next, we extend the analysis to the time domain $t \in [d,+\infty) $.
By leveraging the dynamics of the closed-loop system:
\begin{align}\label{clossytem-td}
\dot x_{i} (t) = f_{i}(x_i(t)) + g_i (x_i(t)) \xi_i(x_i(t - d)),
\end{align}
it can be deduced from (\ref{dV_nd2}), the mean value theorem and Lemma \ref{lemma1} that for $t > d$, we derive that:
\begin{align}\label{dVnd-s21}
\dot V_{1} \leq  &
- a_1 \| x - x^*\|^2  + \left\| \frac{\partial V_{1}}{\partial x} \right\| \sum_{i = 1}^n \|g_i (x_i) \| \|\xi_i(x_i(t -d)) - \xi_i(x_i)   \| \notag\\
\leq  & - a_1 \| x - x^*\|^2 + \left\| \frac{\partial V_{1}}{\partial x} \right\| \|g (x) \| \| \xi(x(t -d)) - \xi(x) \|,
\end{align}
where $a_1 = \beta_{\Phi} m_{\Phi}/2$,
$g (x) = [g_1 (x_1), g_2 (x_2),\ldots,g_n (x_n)]^T$,
and $\xi = [\xi_1,\xi_2,\ldots,\xi_n]^T$.

Define an auxiliary error as $e_i = x_i - x_i^*$ and an auxiliary vector $e = [e_1,e_2,\ldots,e_n]^T$.
Split the control law as \(\xi(x)=\xi^{\mathrm{sm}}(x)+\xi^{\mathrm{sgn}}(x)\), where \(\xi^{\mathrm{sm}}(x)\) collects the smooth feedback terms and \(\xi^{\mathrm{sgn}}(x)\) denotes the sign-based consensus-enhancing term. On the compact set \(\Omega_{0,V_1}\), the smooth component \(\xi^{\mathrm{sm}}\) is locally Lipschitz, while the sign-based component \(\xi^{\mathrm{sgn}}\) is uniformly bounded but may be discontinuous on the switching surfaces. Therefore, we do not impose a Lipschitz estimate on \(\xi^{\mathrm{sgn}}\). Instead, its possible jump is absorbed into a bounded residual.
Since the solution trajectories remain in the compact set \(\Omega_{0,V_1}\), there exist positive constants \(L_{\xi,r}\), \(L_{\wp,r}\), \(\bar g_r\), and \(\bar \wp_r\) such that $\|g_i(x_i)^{-1}\|\le \bar g_r$, $|\wp_{ij}(x)-\wp_{ij}(y)|\le L_{\wp,r}\|x-y\|$ and $\wp_{ij}(x)\le \bar \wp_r$, for all \(x,y\in\Omega_{0,V_1}\).
From Lemmas \ref{lemma2} and \ref{lemma3}, the local Lipschitz property of \(\xi^{\mathrm{sm}}\), and the uniform boundedness of \(\xi^{\mathrm{sgn}}\) on \(\Omega_{0,V_1}\), there exist a smooth function \(\beta_1(x(t-d))\ge 1\) and constants \(b_1,b_2>0\), independent of \(d\), such that:
\begin{align}\label{S2-1}
\| \xi(x(t-d)) - \xi(x) \| \leq & b_1 \beta_1 (x(t-d)) \| e(t-d) - e(t)\| + b_2.
\end{align}
where $b_1=L_{\xi,r}+\varepsilon \bar g_r \Delta_G L_{\wp,r}$ and $b_2=2\varepsilon \bar g_r \Delta_G \bar\wp_r$ with $\Delta_G = \max_i | \mathcal N_i |$.

From (\ref{V_ndcondition2}) and (\ref{dV_nd2}), it follows that, for some $a_0 > 0$,
\begin{align}\label{S2-2}
\left. \left\| \frac{\partial V_{1}}{\partial x} \right\| \right|_{\Omega_{0,V_{1}}} \leq a_0 \| e \|.
\end{align}
Assume there exists a positive function $\beta_2 (x) \geq 1$ satisfying $g(x)\leq \beta_2 (x)$.
Thus, based on (\ref{dVnd-s21})-(\ref{S2-2}), for $t \geq d$, it obtains:
\begin{align}\label{dVnd-s22}
\left. \dot V_{1} (x) \right|_{\Omega_{0,V_{1}}} \leq& - a_1 \|e\|^2  + a_0\beta_2(x)\|e\| ( b_1 \beta_1 (x(t-d))  \|e(t-d)-e(t)\|+b_2)\notag\\
\leq& -\frac{a_1}{2}\|e\|^2 +\bar a_2 \beta_1^2 (x(t-d))\|e(t-d)-e(t)\|^2 +\bar a_3,
\end{align}
for some $\bar a_2,\bar a_3 > 0$ independent of the time delay $d$.
Here, \(\bar a_3\) is the residual term induced by the discontinuous sign-based channel and the boundedness of \(\beta_2(x)\) on \(\Omega_{0,V_1}\).

Let $\ell_i (x_i(t), u_i (t -d)) = f_i (x_i(t)) + g_i(x_i(t)) u_i (t -d)$.
Due to the smoothness of $f_i (x_i(t))$ and $g_i(x_i(t))$ with $f_i(\mathbf{0}_m) = \mathbf{0}_m$,
one has $\ell_i (\mathbf{0}_m, \mathbf{0}_m) = \mathbf{0}_m$.
Based on Lemmas \ref{lemma2} and \ref{lemma3},
it implies that there exist $\varpi_1 (x(\tau)) \geq 1$, $\varpi_2 (x(\tau - d)) \geq 1$, and a constant \(\varpi_0>0\), such that:
\begin{align}\label{dVnd-s23}
\left. \| e(t -d) - e(t)\|^2 \right|_{\Omega_{0,V_{1}}}
\leq& d \int_t^{t-d} \| \dot e(\tau) \|^2 d\tau \notag\\
\leq& d \int_{t-d}^t (\varpi_1 (x(\tau)) \| e (\tau)\|  + \varpi_2 (x(\tau - d)) \| e (\tau - d)\| \notag\\
& + \varpi_0)^2 d \tau \notag\\
\leq& 3 d \int_{t-d}^t (\varpi_1^2 (x(\tau)) \| e (\tau)\|^2 + \varpi_2^2 (x(\tau - d)) \| e (\tau - d)\|^2 \notag\\
&+ \varpi_0^2) d \tau \notag\\
\leq& 3 d^2 \sup_{\hbar \in [-2d,0]} \varpi_3 (x(t + \hbar)) e(t + \hbar) \|^2 + 3\varpi_0^2 d^2,
\end{align}
where $\ell = [\ell_1, \ell_2 , \ldots, \ell_n]^T$, and
$\varpi_3 (\bullet) = \varpi_1^2 (\bullet) + \varpi_2^2 (\bullet)$ is a smooth scalar function.
Substituting (\ref{dVnd-s23}) into (\ref{dVnd-s22}) obtains that, for all $t \in [d, +\infty)$,
\begin{align}\label{dVnd-s24}
\left. \dot V_{1} (x) \right|_{\Omega_{0,V_{1}}}
\leq& - \frac{a_1}{2} \|e\|^2 + 3 d^2 \bar a_2 \beta_1^2 (x(t - d)) \sup_{\hbar \in [-2d,0]} \varpi_3 (x(t + \hbar)) \| e(t + \hbar) \|^2 \notag\\
&+ \bar a_3 + 3\bar a_2 \beta_1^2 (x(t-d))\varpi_0^2 d^2.
\end{align}

Next, choose \(p(s)=3s\), which satisfies \(p(s)>s\) for all \(s>0\). 
In the following Razumikhin-type comparison, suppose that, for all \(\hbar\in[-2d,0]\),
\begin{align}\label{dVnd-s25}
V_{1}(x(t+\hbar)) < p[V_{1}(x(t))] = 3V_{1}(x(t)).
\end{align}
Then, using the quadratic bounds in \eqref{V_ndcondition2}, one obtains:
\begin{align}\label{dVnd-s26}
\frac{m_{\Phi}}{2} \| e(t + \hbar) \|^2
\leq&
V_{1} (x(t+ \hbar)) \notag\\
<&
3V_{1}(x(t)) \notag\\
\leq&
\frac{3L_{\Phi}}{2} \| e(t) \|^2,
\end{align}
for all \(t \in [d, +\infty)\) and \(\hbar \in [-2d,0]\).
Thus, one has:
\begin{align}\label{dVnd-s27}
\left. \sup_{\hbar \in [-2d,0]}  \| e(t + \hbar) \|^2 \right|_{\Omega_{0,V_{1}}} \leq \frac{3L_{\Phi}}{m_{\Phi}} \| e(t) \|^2,
\end{align}
and
\begin{align}\label{dVnd-s27b}
\left. \sup_{\hbar \in [-2d,0]}  \| e(t + \hbar) \| \right|_{\Omega_{0,V_{1}}} \leq& \sqrt{\frac{6}{m_{\Phi}}   \left. V_{1}(x(t)) \right|_{\Omega_{0,V_{1}}} } \notag\\
\leq& \sqrt{ \frac{6r_{0,V_{1}}}{m_{\Phi}}  }.
\end{align}

Substituting (\ref{dVnd-s27})-(\ref{dVnd-s27b}) into (\ref{dVnd-s24}),
it yields that for all $t \in [d, +\infty)$,
\begin{align}\label{dVnd-s28}
\left. \dot V_{1} (x) \right|_{\Omega_{0,V_{1}}}
\leq& - \frac{a_1}{2} \|e(t)\|^2 + 3 d^2 \bar a_2 a_4 a_5 \| e(t) \|^2 + \bar a_3 + 3 d^2 \bar a_2 a_6 \varpi_0^2,
\end{align}
where $a_4 = \sup_{\| \epsilon \| \leq  \sqrt{ \frac{6r_{0,V_{1}}}{m_{\Phi}}  }}
\beta_1^2 (\epsilon) \cdot \sup_{\| \epsilon \| \leq  \sqrt{ \frac{6r_{0,V_{1}}}{m_{\Phi}}  }}
\varpi_3 (\epsilon)$, $a_5 = \frac{3L_{\Phi}}{m_{\Phi}}$ and $a_6 = \sup_{\| \epsilon \| \leq  \sqrt{ \frac{6r_{0,V_{1}}}{m_{\Phi}}  }} \beta_1^2 (\epsilon)$.

Noting that \(a_4\), \(a_5\), and \(a_6\) are independent of $d$,
we provide a parameter $\bar a = 3\bar a_2 a_4 a_5$ and the maximal delay $\bar d_{c2} = \sqrt{ \frac{a_1}{4 \bar a} }$, depending on $r_{0,V_{1}}$ thereby on $r > 0$.
As a consequence, for $d \leq \bar d_{c2}$,
(\ref{dVnd-s28}) is reduced to:
\begin{align}\label{dVnd-s29}
\left. \dot V_{1} (x) \right|_{\Omega_{0,V_{1}}}  \leq  & - \frac{a_1}{4} \|e\|^2 + C_s(d),\quad \forall t \in [d, +\infty),
\end{align}
where $C_s(d)=\bar a_3 + 3 d^2 \bar a_2 a_6 \varpi_0^2$.
By \eqref{V_ndcondition2}, one has $\|e\|^2 \ge \frac{2}{L_{\Phi}}V_1(x)$.
Substituting this estimate into \eqref{dVnd-s29} gives, $\forall t \in [d, +\infty)$,
\begin{align}\label{dVnd-s30}
\left. \dot V_{1} (x) \right|_{\Omega_{0,V_{1}}}
\leq - \frac{a_1}{2L_{\Phi}} V_1(x) + C_s(d),
\end{align}

Let \(\bar d=\min\{\bar d_{c1},\bar d_{c2}\}>0\). Then, for any \(d\in(0,\bar d]\), the solution trajectories of the closed-loop system \eqref{clossytem-td}, starting from any continuous initial history \(x(s)\in C([-d,0],\mathbb R^n)\) with \(\|x(s)\|_C\le \sqrt n r\), are well defined and remain in the compact set \(\Omega_{0,V_1}\) for all \(t\ge0\). Moreover, the Razumikhin-type estimate \eqref{dVnd-s30} holds on \(\Omega_{0,V_1}\), where \(C_s(d)\) collects the bounded residual induced by the discontinuous sign-based channel and the delay-dependent mismatch.

Applying the comparison principle to \eqref{dVnd-s30}, one obtains:
\[
\limsup_{t\to\infty} V_1(x(t)) \le \frac{2L_{\Phi}}{a_1}C_s(d).
\]
Combining this estimate with \eqref{V_ndcondition2} further yields:
\begin{align}\label{dVnd-s31}
\limsup_{t\to\infty}\|x(t)-x^*\| \le \sqrt{\frac{4L_{\Phi}}{a_1m_{\Phi}}\,C_s(d)}.
\end{align}

Therefore, the closed-loop system \eqref{clossytem-td} is uniformly ultimately bounded with respect to the optimal solution \(x^*\). Hence, the NNS \eqref{system} exhibits practical IDTSC for distributed convex optimization as long as \(d\le \bar d\). In particular, the state \(x_i(t)\) semiglobally converges to a neighborhood of \(x_i^*\), whose size is determined by the residual term \(C_s(d)\). This residual originates from the bounded treatment of the discontinuous sign-based channel and the delay-dependent mismatch estimate, and thus the obtained ultimate bound is conservative.
\end{proof}

\begin{remark}
The present analysis is developed for a constant input delay.
As a potential extension, the time-varying Razumikhin stability theorem in \cite{ZhouB2016}
offers a possible route to treat bounded time-varying input delays within a similar proof architecture.
Specifically, one may attempt to adapt Step 2 of Theorem 1 by
(i) replacing fixed-delay shift estimates with bounds based on a delay envelope, and (ii) performing a Razumikhin comparison on a fixed window determined by the maximal delay.
A sketch of the required modifications is given below.

Consider a time-varying delay signal $d(\cdot)$ that is continuous (or piecewise continuous) and satisfies:
\[
0 \le d(t) \le \bar d_{\max}, \qquad \forall t\ge 0,
\]
where $\bar d_{\max}$ is a known delay envelope. In our semiglobal framework, one would further require
$\bar d_{\max}\le \bar d(r)$ so that the constant-delay admissibility margin remains respected.
In the constant-delay analysis, the delay enters through the standard estimate
$\|x(t)-x(t-d)\|\le d \sup_{h\in[-d,0]}\|\dot x(t+h)\|$.
For $d(t)$, the corresponding bound is:
\begin{align}\label{R2_1}
\|x(t)-x(t-d(t))\|
\le& \int_{t-d(t)}^{t}\|\dot x(\sigma)\|\,d\sigma \notag\\
\le& \bar d_{\max} \sup_{h\in[-\bar d_{\max},0]}\|\dot x(t+h)\|.
\end{align}
Thus, occurrences of $d$ and $\sup_{h\in[-d,0]}(\cdot)$ in Step 2 can be conservatively replaced by
$\bar d_{\max}$ and $\sup_{h\in[-\bar d_{\max},0]}(\cdot)$, respectively.

The Razumikhin comparison used in our proof (the $p(\cdot)$-argument) would be imposed on the fixed window $[-2\bar d_{\max},0]$:
\begin{align}\label{R2_2}
V(t+\theta)\le p \left(V(t)\right),\qquad \forall \theta\in[-2\bar d_{\max},0],
\end{align}
which yields a bound of the form
$\sup_{\theta\in[-2\bar d_{\max},0]}V(t+\theta)\le c_{\rm cmp}V(t)$ for some $c_{\rm cmp}>1$.

With (\ref{R2_1})-(\ref{R2_2}), the delayed closed loop can be rewritten in the standard functional form $\dot x(t)=f(t,x_t)$ on $C([-\bar d_{\max},0])$.
One may then apply the time-varying Razumikhin stability theorem (e.g., Theorem 1 in \cite{ZhouB2016})
to close the comparison argument and establish convergence under $d(t)$.
A complete extension would require re-deriving the admissibility condition and the comparison constants
for the time-varying delay case, which is challenging and is left for future work.
\end{remark}

\begin{remark}
This paper characterizes an explicit admissible input-delay margin $\bar d = \bar d(r)$ for the proposed SIDT algorithm \eqref{SIDT}, in the semiglobal sense that the tolerable delay depends on the prescribed initial radius $r$ of \eqref{system}.
In particular, $\bar d(r)$ is inversely proportional to $r$ that a larger admissible initial radius leads to a smaller delay margin.
Importantly, the controller implementation is delay-independent in that it does not require the knowledge of the realized delay value and does not involve delay-dependent gain tuning or structural modification,
and the same controller is applied for any admissible delay satisfying $d\le \bar d(r)$.
\end{remark}


\subsection{SIDT algorithm for distributed nonconvex optimization problem}
In this subsection, the application of the SIDT algorithm (\ref{SIDT}) for solving the distributed nonconvex optimization in delay-affected NNSs (\ref{system}) is investigated.

\begin{theoremx}\label{T1}
Suppose that Assumptions 1, 2, and 4 hold. For the distributed nonconvex optimization problem \eqref{Cx} implemented on the NNS \eqref{system}, if \(\varepsilon\ge 2n\), then the SIDT algorithm \eqref{SIDT} renders the closed-loop system input-delay tolerant in the practical semiglobal sense.
\end{theoremx}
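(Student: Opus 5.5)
The plan follows the two-step architecture of the proof of Theorem 1. The only change is that every place where Assumption 3 was used is replaced by consequences of the P-{\L} condition (Assumption 4) together with the $L_\Phi$-smoothness from Assumption 2.

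\textbf{Step 1 (delay-free GALE stability).} The consensus part carries over verbatim: $V_{pre}$ in \eqref{V_t1} and the estimates \eqref{Iterm_1}--\eqref{Iterm_3} never used convexity, so $\varepsilon\ge 2n$ again gives \eqref{V_t14} and exponential decay of $e_x$ and of $s(t)$. We again take $V_1=\Phi(x)-\Phi(x^*)$, which is nonnegative because $\Phi^*$ is the minimum.

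\begin{itemize}
\item Upper bound: \eqref{Lp-5} still holds, since it only uses the Lipschitz gradient and $\nabla\Phi(x^*)=0$ (Lemma \ref{lemma5}).
\item Dissipation: summing \eqref{assumption3} over $i$ gives $\|\nabla\Phi(x)\|^2\ge 2\mu_\Phi V_1$ with $\mu_\Phi=\min_i\mu_{\phi_i}$. This replaces the chain ending in $\kappa_\Phi$, and \eqref{dV_nd1} becomes $\dot V_1\le -2k_0\mu_\Phi V_1+\bar\eta\|s(t)\|$.
\item Lower bound: since the quadratic lower bound \eqref{convex-condition1} is no longer available, we use the known fact that P-{\L} plus $L$-smoothness implies the quadratic growth condition $V_1\ge \frac{\mu_\Phi}{2}\,\mathrm{dist}(x,\mathcal X^*)^2$ \cite{Karimi2016}.
\end{itemize}

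Accordingly, the convergence statement is made with respect to the optimal set $\mathcal X^*$, or to $x^*$ when it is isolated, rather than a unique point. Radial unboundedness in Assumption 4 makes the sublevel sets of $V_1$ compact, which replaces the role of the coercive bound. Applying Lemma \ref{lemma7} with $\mathrm{dist}(x,\mathcal X^*)$ as the measured output, and using the exponential decay of $s(t)$, yields global attractivity of $\mathcal X^*$ and local exponential convergence exactly as in \eqref{dV_nd4}.

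\textbf{Step 2 (input delay).} We repeat the Razumikhin-type comparison of Theorem 1 on the compact level set $\Omega_{0,V_1}=\{V_1\le r_{0,V_1}\}$, which is now compact by radial unboundedness. The no-finite-escape threshold $\bar d_{c1}(r)$ from Lemma 8.1 of \cite{LinWei2020} is unchanged. The steps are:

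\begin{enumerate}
\item Gradient bound: replace \eqref{S2-2} by $\|\partial V_1/\partial x\|=\|\nabla\Phi\|\le L_\Phi\,\mathrm{dist}(x,\mathcal X^*)$.
\item Negative term: write it as $-2k_0\mu_\Phi V_1$ rather than $-a_1\|e\|^2$.
\item Delay mismatch: the splitting \eqref{S2-1} into the smooth channel and the bounded sign channel, and the estimate \eqref{dVnd-s23}, are purely regularity arguments on a compact set. They go through with $e$ replaced by $x-\bar x^*$, where $\bar x^*$ is a fixed optimizer in $\Omega_{0,V_1}$.
\item Razumikhin window: with $p(s)=3s$, the condition \eqref{dVnd-s25} together with the two-sided bounds gives $\sup_{\hbar\in[-2d,0]}\mathrm{dist}(x(t+\hbar),\mathcal X^*)^2\le (3L_\Phi/\mu_\Phi)\,\mathrm{dist}(x(t),\mathcal X^*)^2$.
\end{enumerate}

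Choosing $\bar d_{c2}(r)$ so that the $d^2$-weighted term is absorbed into half of the dissipation gives $\dot V_1\le -k_0\mu_\Phi V_1+C_s(d)$. The comparison principle then yields ultimate boundedness of $V_1$, and therefore of $\mathrm{dist}(x,\mathcal X^*)$, with a bound of order $\sqrt{C_s(d)}$ for $d\le\min\{\bar d_{c1},\bar d_{c2}\}$. This is practical IDTSC.

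\textbf{Main obstacle.} The hard part is the lower quadratic bound and the handling of a possibly non-unique minimizer. Without strong convexity, $V_1$ is not positive definite with respect to a single point, so both the Razumikhin transfer between $V_1$ and the state norm and the ISS estimate must be done relative to the set $\mathcal X^*$. I would first establish the quadratic-growth lemma via the gradient-flow length argument of \cite{Karimi2016}, applied to each $\phi_i$ and then summed. I would then check that the constants $a_4,a_5,a_6$ remain $d$-independent when defined through $\mathrm{dist}(\cdot,\mathcal X^*)$ on the compact set $\Omega_{0,V_1}$.
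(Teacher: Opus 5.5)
\textbf{Gap: the set-valued quadratic-growth bound.} Your plan rests on one new ingredient, $V_1\ge\frac{\mu_\Phi}{2}\,\mathrm{dist}(x,\mathcal X^*)^2$, and it fails in exactly the non-unique case you built it for. The P-{\L}$\Rightarrow$QG result of \cite{Karimi2016} is taken relative to the minimizers of a function over its whole domain, with its global minimum as the reference. At best, that is, if each $\phi_i$ is P-{\L} relative to its own minimum, applying it to each $\phi_i$ and summing (or to $\Phi$ on $\mathbb R^{nm}$) gives $\Phi(x)-\sum_i\min\phi_i\ge\frac{\mu_\Phi}{2}\,\mathrm{dist}(x,\arg\min\phi_1\times\cdots\times\arg\min\phi_n)^2$.

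\begin{itemize}
\item This says something about $V_1$ only if $\Phi^*=\sum_i\min\phi_i$, i.e. the local minimizers share a common point. Otherwise, as in Table~\ref{NE-Table2} where $\phi_1,\phi_2$ are minimized at $1$ and $2$, $V_1<0$ at $(1,2,1,2,1)$. So your remark that $V_1\ge 0$ ``because $\Phi^*$ is the minimum'' is also unjustified: $\Phi^*$ is the minimum over $\mathcal C$ only, while the trajectories live in $\mathbb R^{nm}$. The paper glosses over this too.
\item If the local minimizers do share points and $\mathcal X^*$ has two or more elements, the product set contains nonconsensual points with $V_1=0$ but $\mathrm{dist}(x,\mathcal X^*)>0$. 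Such points lie in every sublevel set $\Omega_{0,V_1}$.
\end{itemize}

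In fact, a bound $V_1\ge c\,\mathrm{dist}(x,\mathcal X^*)^2$ on $\Omega_{0,V_1}$ forces $\mathcal X^*=\{x^*\}$, with $x_i^*$ the common unique minimizer of every $\phi_i$. Several of your steps need this two-sided sandwich, so they all collapse for non-unique $\mathcal X^*$: the set-ISS argument of your Step~1, and in Step~2 both the Razumikhin transfer (item~4) and the absorption of the $d^2$-term into $-k_0\mu_\Phi V_1$. Item~3 adds a further problem: it measures the mismatch with $\|x-\bar x^*\|$ for a fixed $\bar x^*$, which is not comparable to $\mathrm{dist}(\cdot,\mathcal X^*)$ when $\mathcal X^*$ is not a point.

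\textbf{How the paper proceeds.} Your Step~1 dissipation estimate matches \eqref{T2-V23}. Beyond that, the paper does not attempt the set-valued generalization: it assumes $X^*=\{x^*\}$ explicitly. Its Step~2 also departs from the Theorem~1 template precisely so that no quantitative lower bound is needed during the delay analysis.
\begin{itemize}
\item There is no Razumikhin window. It uses $\|x(t)-x(t-d)\|\le d\sup_h\|\dot x(t+h)\|$ and bounds $\sup_h\|\dot x(t+h)\|^2$ by the constant $K_{\nabla,1}G_r^2+K_{\nabla,2}$ on the compact sublevel set via \eqref{T2-V29}.
\item It absorbs the cross terms by Young's inequality to reach \eqref{T2-V33-new}, and argues positive invariance of $\Omega_{0,V_2}$ through \eqref{T2-boundary}.
\item Only at the end does it convert to a state bound, through a class-$\mathcal K$ function obtained from positive definiteness on a compact set, \eqref{T2-alpha}.
\end{itemize}
(If you switch to that route, the $-c\|\nabla\Phi\|^2$ term used in \eqref{T2-V30} must come from $L_\Phi$-smoothness, $V_2\ge\|\nabla\Phi\|^2/(2L_\Phi)$, or from the nominal $-k_0\|\nabla\Phi\|^2$. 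P-{\L} bounds $V_2$ from above, not below, by $\|\nabla\Phi\|^2/(2\mu_\Phi)$.)

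If you adopt the same uniqueness hypothesis, with $x^*$ an unconstrained minimizer of $\Phi$ as Lemma~\ref{lemma5} tacitly requires, then Karimi's result applied to $\Phi$ does give $V_1\ge\frac{\mu_\Phi}{2}\|x-x^*\|^2$. Your Theorem~1--style Razumikhin route is then a legitimate alternative, and it buys an explicit square-root ultimate bound instead of $\alpha_{V_2}^{-1}(\cdot)$. You would still need to prove forward invariance of $\Omega_{0,V_1}$: all your constants are computed on that set, and your sketch takes it for granted.
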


\begin{proof}
The proof is similar to the proof of Theorem 1,
which can be divided into two steps.

\textbf{Step 1:} Demonstrate that the NNSs (\ref{system}) without input delay is global asymptotic and local exponential stability under the regulation of the SIDT algorithm (\ref{SIDT_t}).
Considering the non-delayed NNSs (\ref{system_nd}) and closed-loop system (\ref{closed-sys}),
similarly to the certification process of (\ref{V_t1})-(\ref{V_t14}),
it obtains that $e_{x_i}$ globally converges exponentially to origin.

Before moving on, according to \cite{Karimi2016},
the P-{\L} inequality implies that every stationary point is globally optimal.
Specifically, if $\nabla\Phi(y)=0$, then the P-{\L} inequality forces $\Phi(y)=\Phi^*$.
Consequently, non-optimal critical points, saddle points included, are excluded on any set where P-{\L} holds.
In this theorem, the optimizer is assumed to be unique, i.e., \(X^*=\{x^*\}\).

Next, we propose the Lyapunov function candidate:
\begin{align}\label{T2-V21}
V_{2} = \Phi (x(t)) - \Phi (x^*).
\end{align}
According to (\ref{dV_nd1}), the derivative of (\ref{T2-V21}) along the trajectories of (\ref{closed-sys}) yields that:
\begin{align}\label{T2-V22}
\dot V_{2} &\leq - {k_{0}}(\nabla^T \Phi(x) \nabla \Phi(x) ) + \sum_{i = 1}^n \eta_i^T(x_i (t)) s_i(t),
\end{align}
where
\begin{align*}
s_i(t) =&  \varepsilon \sum_{j \in {\mathcal N}_i} [\wp_{ij}(t) \textrm{sign}(x_j(t) - x_i(t))] + \vartheta \sum_{j = 1}^n a_{ij}(x_j (t) - x_i (t)).
\end{align*}
Based on (\ref{assumption3}) in Assumption 4,
we have:
\begin{align}\label{T2-a1}
\|\nabla \Phi(x) \|^2 \geq 2\mu_{\Phi} (\Phi(x) - \Phi(x^*)),
\end{align}
where $\mu_{\Phi} = \min_{1 \leq i \leq n} \mu_{\phi_i}$.
Substituting (\ref{T2-a1}) into (\ref{T2-V22}) obtains:
\begin{align}\label{T2-V23}
\dot V_{2} \leq& - {k_{0}} (\nabla^T \Phi(x) \nabla \Phi(x) ) + \sum_{i = 1}^n \eta_i^T(x_i (t)) s_i(t)\notag\\
\leq& - 2 {k_{0}} \mu_{\Phi} (\Phi(x) - \Phi(x^*)) + \sum_{i = 1}^n \eta_i^T(x_i (t)) s_i(t)\notag\\
\leq& - \beta_{\Phi,2} V_{2} + \sum_{i = 1}^n \eta_i^T(x_i (t)) s_i(t),
\end{align}
where $\beta_{\Phi,2} = 2 {k_{0}} \mu_{\Phi}> 0$.
Due to the globally exponential convergence of $e_{x_i}$,
it deduces that:
\begin{align}\label{T2-V24}
\dot V_{2} \leq& -  \beta_{\Phi,2} V_{2} + \| \nabla \Phi(x) \| \| s(t) \| \notag\\
\leq& -  \beta_{\Phi,2} V_{2} + \bar \eta  \| s(t) \| \notag\\
= & -  \beta_{\Phi,2} V_{2} + \gamma_{s}(\| s(t) \|),
\end{align}
where $\gamma_{s}(\| s(t) \|) = \bar \eta  \| s(t) \|$ is obviously a class \( \mathcal{K} \) function.

Similarly to the analysis of (\ref{dV_nd3})-(\ref{dV_nd4}),
it thus follows from (\ref{T2-V24}) that, in the vicinity of $x^*$,
the Lyapunov function $V_{2}$ satisfies the inequality:
\begin{align}\label{T2-V25}
\dot V_{2} \leq -  \beta_{\Phi,2} V_{2}.
\end{align}

Drawing upon the results from \cite{Karimi2016} and Lemma \ref{lemma5},
it can be deduced that any stationary point,
where $\nabla \Phi(x) = 0$,
is necessarily a global optimal solution.
Consequently, within the global domain of $x$,
it follows from (\ref{T2-V24})-(\ref{T2-V25}) that: $\lim_{t \rightarrow \infty} \| x(t) - x^* \| = 0 $.

Additionally, based on Assumption 4, $\Phi(x)$ is radial unbounded,
implying the Lyapunov function $V_{2}$ (\ref{T2-V21}) is also exhibits radial unboundedness.
Specifically, one has:
\begin{align*}
\lim_{\| x \| \rightarrow \infty} V_{2} = \infty.
\end{align*}
Therefore, it can be concluded that:
\begin{enumerate}
    \item[a)] $ V_{2}$ is both positive definite and radially unbounded.
    \item[b)] $\lim_{t \rightarrow \infty} \| x(t) - x^* \| = 0 $.
    \item[c)] in the neighborhood of $x^*$, $V_{2}$ satisfies $\dot V_{2} \leq -  \beta_{\Phi,2} V_{2}$.
\end{enumerate}
Thus, the closed-loop system \eqref{closed-sys} is GALE stable at \(x^*\).

\textbf{Step 2:}
Because the SIDT algorithm \eqref{SIDT} contains an ideal sign-based channel, the delayed closed-loop vector field is generally discontinuous on the switching surfaces. Therefore, the following argument is not based on the local Lipschitz continuity of the whole closed-loop functional. Instead, as in Theorem 1, we use a Razumikhin-type comparison estimate, where the smooth part of the feedback is treated through a local Lipschitz bound and the sign-based part is absorbed into a bounded residual.

From Step 1, the nominal closed loop admits, on the compact set \(\Omega_{0,V_2}\), the estimate:
\[
\nabla\Phi(x)^T[f(x)+g(x)\xi(x)]\le -\beta_{\Phi,2}V_2(x),
\]
possibly after reducing \(\beta_{\Phi,2}\).
In alignment with the analytical proof in Theorem 1,
consider the the delayed closed loop \eqref{clossytem-td}, differentiate $V_2$ along trajectories and add-subtract $\xi(x(t))$:
\begin{align}\label{T2-V26}
\dot V_2
= & \nabla\Phi(x(t))^T [f(x(t))+g(x(t)) \xi(x(t-d))]\notag\\
= & \underbrace{\nabla\Phi(x(t))^T [f(x(t))+g(x(t)) \xi(x(t))]}_{\le\,-\beta_{\Phi,2}V_{2}(t)\ \text{by }\eqref{T2-V25}} \notag\\
& +\nabla\Phi(x(t))^T g(x(t))(\xi(x(t-d))-\xi(x(t))).
\end{align}

Let $\Omega_{0,V_2}:=\{x\in\mathbb R^n: V_2(x)\le r_{0,V_2}\}$ be a compact sublevel set.
Since the closed-loop trajectories remain in \(\Omega_{0,V_2}\), the smooth part of \(\xi(\cdot)\) is Lipschitz on \(\Omega_{0,V_2}\), while the sign-based channel is uniformly bounded there.
Hence, there exist positive constants \(L_{\xi,r}>0\) and \(M_{\xi,r}>0\) such that
$\|\xi(x(t-d))-\xi(x(t))\| \le L_{\xi,r}\|x(t)-x(t-d)\|+M_{\xi,r}$, $\forall x(t),x(t-d)\in\Omega_{0,V_2}$.
Moreover, let $C_g = \sup_{x \in \Omega_{0,V_2}}\|g(x)\|$, $C_0 = C_g L_{\xi,r}$ and $C_1 = C_g M_{\xi,r}$.
Hence, from \eqref{T2-V26},
\begin{align}\label{T2-V27}
\left.\dot V_{2}(x)\right|_{\Omega_{0,V_2}} \le& -\beta_{\Phi,2}V_{2} + C_1 \|\nabla\Phi(x(t))\| + C_0 \|\nabla\Phi(x(t))\| \|x(t)-x(t-d)\|.
\end{align}

Using the standard trajectory estimate,
\begin{align}\label{T2-V28}
\|x(t)-x(t-d)\| \le d \sup_{h\in[-d,0]}\|\dot x(t+h)\|,
\end{align}
and noting that the delayed closed-loop vector field consists of a smooth part plus a bounded sign-based channel, there exist constants \(K_{\nabla,1}>0\) and \(K_{\nabla,2}>0\), independent of \(d\), such that:
\begin{align}\label{T2-V29}
\|\dot x(\tau)\|^2 \le K_{\nabla,1}\|\nabla\Phi(x(\tau))\|^2 + K_{\nabla,2},
\end{align}
whenever $x(\tau)\in\Omega_{0,V_2}$.

Combining \eqref{T2-V28}-\eqref{T2-V29} with \eqref{T2-V27} and applying Young's inequality with a tuned parameter yields:
\begin{align}\label{T2-V30}
\left.\dot V_{2}(x)\right|_{\Omega_{0,V_2}}
\le& -\frac{\beta_{\Phi,2}}{2} V_{2} -\frac{\beta_{\Phi,2}}{4\mu_\Phi} \|\nabla\Phi(x(t))\|^2 + C_1 \|\nabla\Phi(x(t))\|\notag\\
&+ C_0 \|\nabla\Phi(x(t))\| (d \sup_{h\in[-d,0]}\|\dot x(t+h)\|)\notag\\
\le& -\frac{\beta_{\Phi,2}}{2} V_{2} + \frac{2\mu_\Phi C_0^2}{\beta_{\Phi,2}} d^2 \sup_{h\in[-d,0]}\|\dot x(t+h)\|^2  + \frac{2\mu_\Phi C_1^2}{\beta_{\Phi,2}},
\end{align}
where the P-{\L} inequality $\|\nabla\Phi(x)\|^2\ge 2\mu_\Phi V_2(x)$ is utilized to cancel the mixed term.

Since \(\Omega_{0,V_2}\) is compact and \(\nabla\Phi(\cdot)\) is continuous, there exists a constant \(G_r>0\) such that:
\[
\|\nabla\Phi(x)\|\le G_r,\qquad \forall x\in\Omega_{0,V_2}.
\]
Invoking \eqref{T2-V29}, one has:
\begin{align}\label{T2-V31-new}
\left. \sup_{h\in[-d,0]}\|\dot x(t+h)\|^2 \right|_{\Omega_{0,V_2}}
\le K_{\nabla,1}G_r^2+K_{\nabla,2}.
\end{align}
Substituting \eqref{T2-V31-new} into \eqref{T2-V30} yields:
\begin{align}\label{T2-V33-new}
\left.\dot V_{2}(x(t)) \right|_{\Omega_{0,V_2}}
\le -\frac{\beta_{\Phi,2}}{2}V_2(x(t)) + C_{\mathrm{PL}}(d),
\end{align}
where
\[
C_{\mathrm{PL}}(d) =
\frac{2\mu_\Phi C_1^2}{\beta_{\Phi,2}} + \frac{2\mu_\Phi C_0^2 d^2}{\beta_{\Phi,2}} \left(K_{\nabla,1}G_r^2+K_{\nabla,2}\right).
\]

Choose the compact sublevel set \(\Omega_{0,V_2}\) such that it contains the prescribed initial history and satisfies:
\[
\sup_{\theta\in[-d,0]}V_2(x(\theta))<r_{0,V_2}.
\]
Moreover, by increasing \(r_{0,V_2}\) if necessary, assume that:
\[
r_{0,V_2}>\frac{2}{\beta_{\Phi,2}}C_{\mathrm{PL}}(d).
\]
Then, on the boundary \(V_2(x)=r_{0,V_2}\), it follows from
\eqref{T2-V33-new} that:
\begin{align}\label{T2-boundary}
\left.\dot V_2(x(t))\right|_{V_2=r_{0,V_2}} \le& 
-\frac{\beta_{\Phi,2}}{2}r_{0,V_2} + C_{\mathrm{PL}}(d) \notag\\
< &  0.
\end{align}
Therefore, the sublevel set \(\Omega_{0,V_2}\) is positively invariant for the delayed closed-loop system as long as the solution is well defined.

Together with the short-time well-posedness bound established in Theorem 1, there exists a delay bound:
\begin{align}\label{t2_bard}
    \bar d=\min\{\bar d_{c1},\bar d_{c2}\}>0,
\end{align}
depending on the prescribed initial radius \(r\), such that for every
\(d\in(0,\bar d]\), the solution of the delayed closed-loop system
\eqref{clossytem-td}, starting from any continuous initial history
\(x(\theta)\in C([-d,0],\mathbb R^n)\), is well defined and remains in
\(\Omega_{0,V_2}\) for all \(t\ge0\). Here, \(\bar d_{c1}\) guarantees the
short-time well-posedness on \([0,d]\), while \(\bar d_{c2}\) is chosen such
that the positive-invariance condition \eqref{T2-boundary} holds on
\(\Omega_{0,V_2}\).

Applying the comparison principle to \eqref{T2-V33-new}, for all \(t\ge d\),
one obtains:
\begin{align}\label{T2-V34-new}
V_2(x(t))
\le&
e^{-\frac{\beta_{\Phi,2}}{2}(t-d)}V_2(x(d)) \notag\\
&+
\frac{2}{\beta_{\Phi,2}}
\left(
1-e^{-\frac{\beta_{\Phi,2}}{2}(t-d)}
\right)
C_{\mathrm{PL}}(d).
\end{align}
Consequently,
\begin{align}\label{T2-V35-new}
\limsup_{t\to\infty}V_2(x(t))
\le
\frac{2}{\beta_{\Phi,2}}C_{\mathrm{PL}}(d).
\end{align}

Since the optimizer is unique, i.e., \(X^*=\{x^*\}\), and \(V_2(x)=\Phi(x)-\Phi(x^*)\)
is positive definite with respect to \(x^*\) on \(\Omega_{0,V_2}\), there exists
a class-\(\mathcal K\) function \(\alpha_{V_2}\) such that:
\begin{align}\label{T2-alpha}
\alpha_{V_2}(\|x-x^*\|)\le V_2(x),
\qquad \forall x\in\Omega_{0,V_2}.
\end{align}
Combining \eqref{T2-V35-new} with \eqref{T2-alpha} yields:
\begin{align}\label{T2-V36-new}
\limsup_{t\to\infty}\|x(t)-x^*\|
\le
\alpha_{V_2}^{-1}
\left(
\frac{2}{\beta_{\Phi,2}}C_{\mathrm{PL}}(d)
\right).
\end{align}

Therefore, under Assumption 4, the delayed closed-loop NNS
\eqref{clossytem-td} is input-delay tolerant in the practical semiglobal
sense for distributed nonconvex optimization. Specifically, for every
prescribed compact initial set, there exists an admissible delay bound
\(\bar d>0\) such that, for all \(d\in(0,\bar d]\), the state trajectory remains
well defined, stays bounded, and converges to a neighborhood of the unique
optimizer \(x^*\). The size of this ultimate neighborhood is characterized by
\(C_{\mathrm{PL}}(d)\), which collects the bounded residual induced by the
discontinuous sign-based channel and the delay-dependent mismatch estimate.
Thus, the obtained convergence result is practical rather than exact, and the ultimate bound is conservative.


\end{proof}

\begin{remark}
While the SIDT algorithm (\ref{SIDT}) demonstrates versatility in handling both types of optimization problems, its tolerance to input delays varies between the convex and non-convex conditions.
Specifically, the algorithm exhibits different levels of robustness to input delays (namely, the different upper bound delay $\bar d$ in (\ref{dVnd-s28})-(\ref{dVnd-s29}) and (\ref{t2_bard}), respectively) depending on the convexity properties of the optimization problem.
\end{remark}

\begin{remark}
The discontinuity of $\mathrm{sign}(\cdot)$ in (\ref{SIDT})-(\ref{SIDT_t}) may induce chattering.
A standard remedy is to replace $\mathrm{sign}(s)$, for all $s\in\mathbb R^m$, by a continuous boundary-layer approximation:
\begin{align*}
\operatorname{sat}_\sigma (s) = \frac{s}{\max\{\sigma,  \|s\|\}},\quad \sigma>0.
\end{align*}
These proxies satisfy that $\|\operatorname{sat}_\sigma(s)\|\le 1$, $s^T \operatorname{sat}_\sigma(s) \ge \|s\|-\sigma$ and $\operatorname{sat}_\sigma (s)\to s/ \| s \|$ pointwise for $s\neq 0$ as $\sigma \searrow 0$.
It is noted that our proof in Theorems 1 and 2 use only boundedness and this sector inequality.
Therefore, the analysis carries over verbatim with $\mathrm{sign}$ replaced by $\operatorname{sat}_\sigma$.
In particular, for Theorems 1 and 2, there exist constants $c_1,c_2>0$ (independent of $\sigma$) and a delay bound $\bar d>0$ such that, for all $d\le \bar d$,
\begin{align*}
\dot V_l \le -c_1 V_l + c_2\sigma,
\end{align*}
whence
\begin{align*}
V_l  \le e^{-c_1(t-t_0)} V_l(t_0) + \frac{c_2}{c_1}\sigma,
\end{align*}
where $l \in \{1,2\}$.
Namely, practical consensus in the Lyapunov metric within an $\mathcal O(\sigma)$ neighborhood.
\end{remark}

\begin{remark}
The SIDT algorithm gains $k_0$, $\vartheta$, $\varepsilon$ in (\ref{SIDT}) is employed to obtain explicit admissible delay margins $\bar d$ in Theorems 1 and 2.
As usual, there is a speed-delay trade-off such that increasing the gains accelerates convergence but also enlarges the constants $\bar a$ in (\ref{dVnd-s28}) and $\beta_{\Phi,2}$ in (\ref{T2-V33-new})-\eqref{T2-V34-new} that ultimately shrink $\bar d$.
Moreover, the condition $\varepsilon\ge 2n$ in Theorems 1 and 2 is a uniform, topology-agnostic bound ensuring the consensus term dominates aggregated disagreement,
which is conservative and scales with network size.
In large networks this pushes the minimal admissible $\varepsilon$ upward and thereby reduces the delay tolerance.
Hence, in practice one should balance network size and sparsity against the required delay robustness when selecting gains.
Furthermore, a systematic treatment of adaptive/scheduled gains and topology-dependent bounds that avoid explicit $n$-scaling is a promising direction for future work.
\end{remark}



\section{Numerical Examples}

\begin{figure}[t]
\centering
\makebox{\includegraphics[width=6cm]{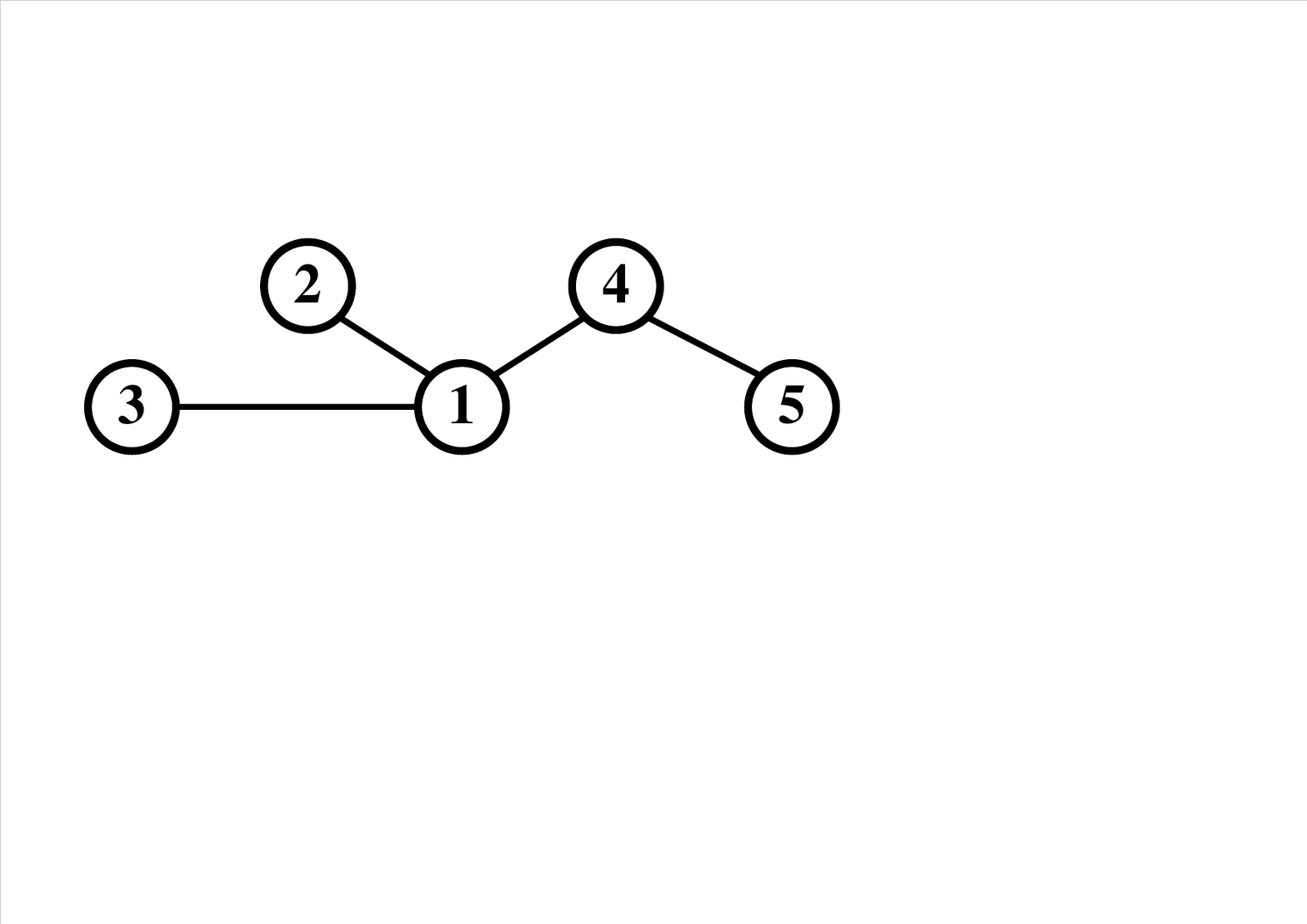}} 
\caption{Illustration of communication network in the NNS (\ref{system}).}\label{top}
\end{figure}

In this section, we present two numerical examples to demonstrate the proposed SIDT algorithm (\ref{SIDT}) within the frameworks of convex and nonconvex optimization, respectively.
Both examples are grounded in the identical NNS (\ref{system}) and utilize the same control parameters in (\ref{SIDT}).
We begin by outlining the shared NNS's configuration and control parameter settings that are fundamental to both optimization scenarios.
As shown in Fig. \ref{top}, consider a NNS (\ref{system}) consisting of $5$ nodes communicating under the undirected graph ${\mathcal{G}}$ such that:
\begin{align}\label{NE-G-L}
{\mathcal L} = \begin{bmatrix}
3 & -1 & -1 & -1 & 0 \\
-1 & 2 & -1 & 0 & 0 \\
-1 & -1 & 2 & 0 & 0 \\
-1 & 0 & 0 & 2 & -1 \\
0 & 0 & 0 & -1 & 1 \\
\end{bmatrix},
\end{align}
where ${\mathcal L}$ is the Laplacian matrix of the graph ${\mathcal{G}}$.
The dynamical model of the NNS (\ref{system}) with $m = 1$ is presented as:
\begin{align}\label{NE-system}
\dot{x}_i(t) =& x_i^2(t) + u_i(t-d), \quad i = 1, \ldots, n,
\end{align}
where $d$ is the input delay.
For each example, we determine the admissible delay threshold $\bar d$ separately,
since the objective's curvature properties (e.g., strong convexity and P-{\L} conditions) affect the comparison constants that define $\bar d$.
It is important to emphasize that although we set \( d \), the controller does not require explicit knowledge of $d$ during simulations.
Instead, it operates solely based on the system state affected by the delay \( d \).
Then, the setting of control parameters in (\ref{SIDT}) is proposed as:
$\vartheta = 0.1$, $\varepsilon = 11$, and $k_0 = 0.1$.
To proceed, two numerical examples are presented as follows.

\begin{table}[t]
\caption{The local convex cost function of NNS (\ref{NE-system})}
\label{NE-Table1}
\centering
\renewcommand\arraystretch{1.5}{
\setlength{\tabcolsep}{9mm}{
\begin{tabular}{p{1.5cm}<{\centering} p{3cm}<{\centering}}
\hline
\textbf{Node index} & \textbf{Local cost function}     \\ \hline
\textbf{1}           & $\phi_1 = x_1^2 + x_1 + 1$          \\
\textbf{2}           & $\phi_2 = x_2^2 + 2x_2 + 2$          \\
\textbf{3}           & $\phi_3 = x_3^2 + 3x_3 + 3$          \\
\textbf{4}           & $\phi_4 = x_4^2 + 4x_4 + 4$          \\
\textbf{5}           & $\phi_5 = x_5^2 + 5x_5 + 5$           \\ \hline
\end{tabular}}}
\end{table}

\begin{figure}[t]
\centering
\makebox{\includegraphics[width=8.2cm]{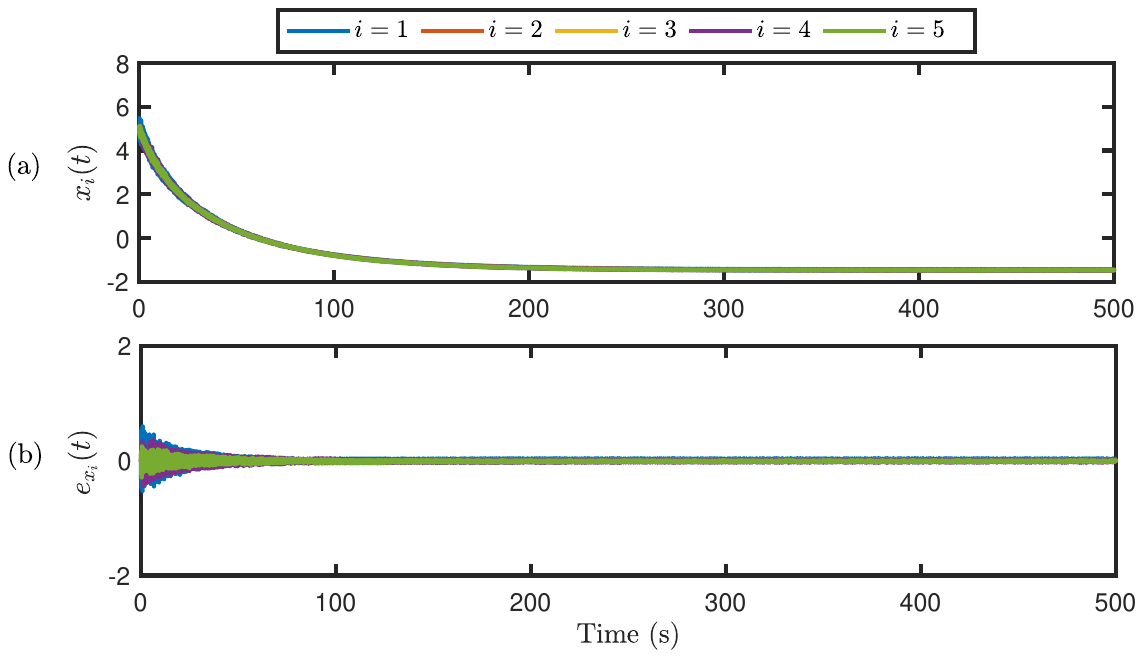}} 
\caption{Evolution of system states \( x_i(t) \) and the state errors \( e_{x_i}(t) \).
(a): Evolution of \( x_i(t) \), $\forall i \in {\mathcal V}$.
(b): Evolution of \( e_{x_i}(t) \), $\forall i \in {\mathcal V}$.}\label{CO-xi}
\end{figure}

\begin{figure}[t]
\centering
\makebox{\includegraphics[width=8.5cm]{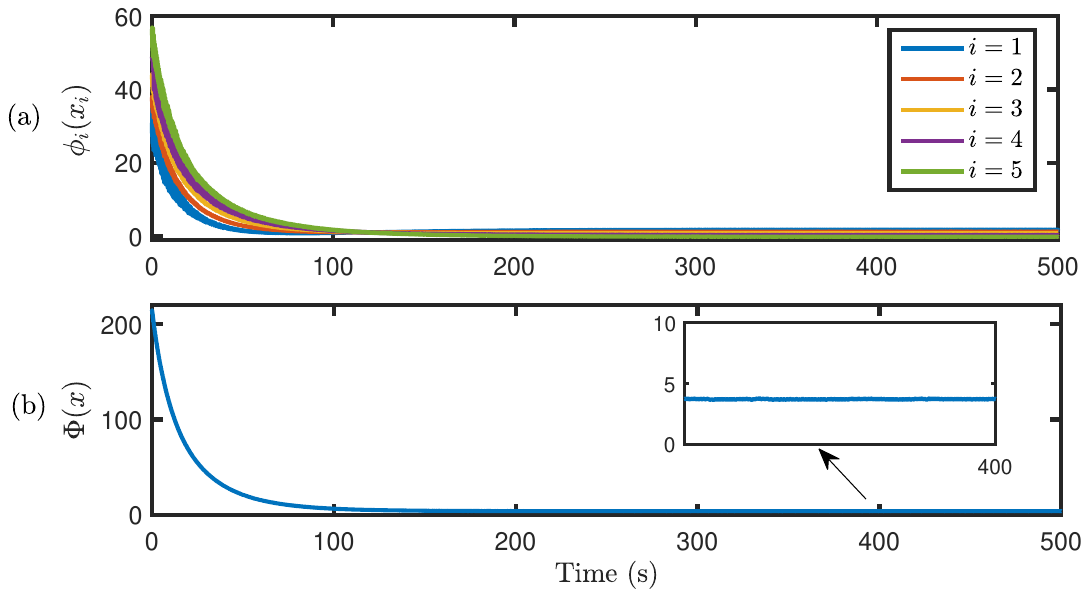}} 
\caption{Evolution of local and global cost functions.
(a): Evolution of $\phi_i (x_i)$, $\forall i \in {\mathcal V}$.
(b): Evolution of $\Phi(x)$. }\label{CO-f}
\end{figure}

\begin{example}
\textbf{(Example for distributed convex optimization)}
In this numerical example,
the SIDT algorithm (\ref{SIDT}) is utilized for the distributed convex optimization in the delay-affected NNS (\ref{NE-system}).
The local cost functions are respectively provided in Table \ref{NE-Table1}.
We set the initial radius \( r = 5 \), namely, initial at $(- 5, 5)$, and select the delay parameter \( d \) randomly in the range \( \{0.01, 0.03, 0.05\} \).

The simulation results of the numerical examples are presented in Figs. \ref{CO-xi} and \ref{CO-f}.
By Fig. \ref{CO-xi}, it illustrates the temporal evolution of the system states \( x_i(t) \) and the state errors \( e_{x_i}(t) \).
It is evident that the states converge to consensus, thereby satisfying the constraints of the distributed optimization problem (\ref{Cx}), and practically achieving the optimal solution \( x^* =  - 1.47 \).
As shown in Fig. \ref{CO-f},
it depicts the dynamics of both the local cost functions $\phi_i (x_i)$, $\forall i \in {\mathcal V}$, and global cost functions $\Phi(x)$ over time.
Based on these results, it can be concluded that the SIDT algorithm (\ref{SIDT}) effectively regulates delay-affected NNS (\ref{NE-system}) to attain IDTSC in the practical sense for distributed optimization.
\end{example}

\begin{table}[htbp]
\caption{The local nonconvex cost function of NNS (\ref{NE-system})}
\label{NE-Table2}
\centering
\renewcommand\arraystretch{1.5}{
\setlength{\tabcolsep}{9mm}{
\begin{tabular}{p{1.5cm}<{\centering} p{5cm}<{\centering}}
\hline
\textbf{Node index} & \textbf{Local cost function}          \\ \hline
\textbf{1}           & $\phi_1 = (x_1 - 1)^2 + 2\sin^2(x_1 - 1)$          \\
\textbf{2}           & $\phi_2 = (x_2 - 2)^2 + 2\sin^2(x_2 - 2)$         \\
\textbf{3}           & $\phi_3 = (x_3 - 1)^2 + 2\sin^2(x_3 - 1)$          \\
\textbf{4}           & $\phi_4 = (x_4 - 2)^2 + 2\sin^2(x_4 - 2)$          \\
\textbf{5}           & $\phi_5 = (x_5 - 1)^2 + 2\sin^2(x_5 - 1)$          \\ \hline
\end{tabular}}}
\end{table}

\begin{figure}[t]
\centering
\makebox{\includegraphics[width=8cm]{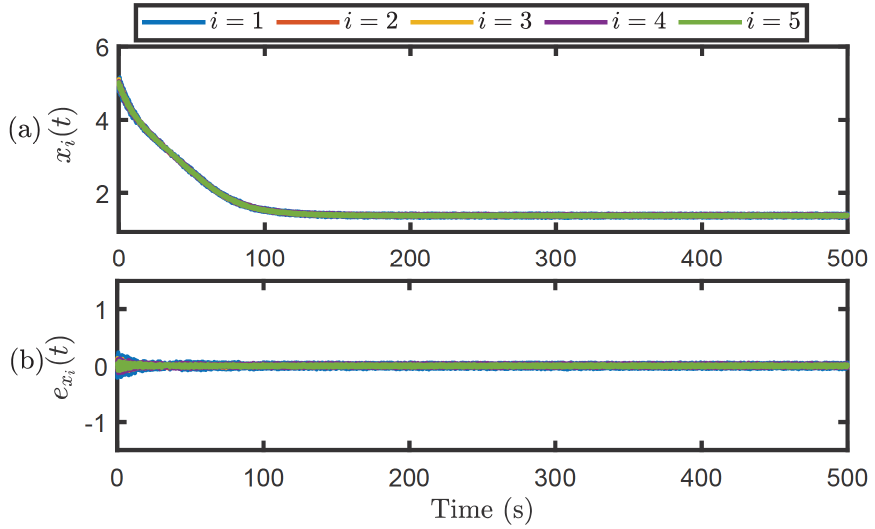}} 
\caption{Evolution of system states \( x_i(t) \) and the state errors \( e_{x_i}(t) \).
(a): Evolution of \( x_i(t) \), $\forall i \in {\mathcal V}$.
(b): Evolution of \( e_{x_i}(t) \), $\forall i \in {\mathcal V}$.}\label{NCO-xi}
\end{figure}

\begin{figure}[t]
\centering
\makebox{\includegraphics[width=8cm]{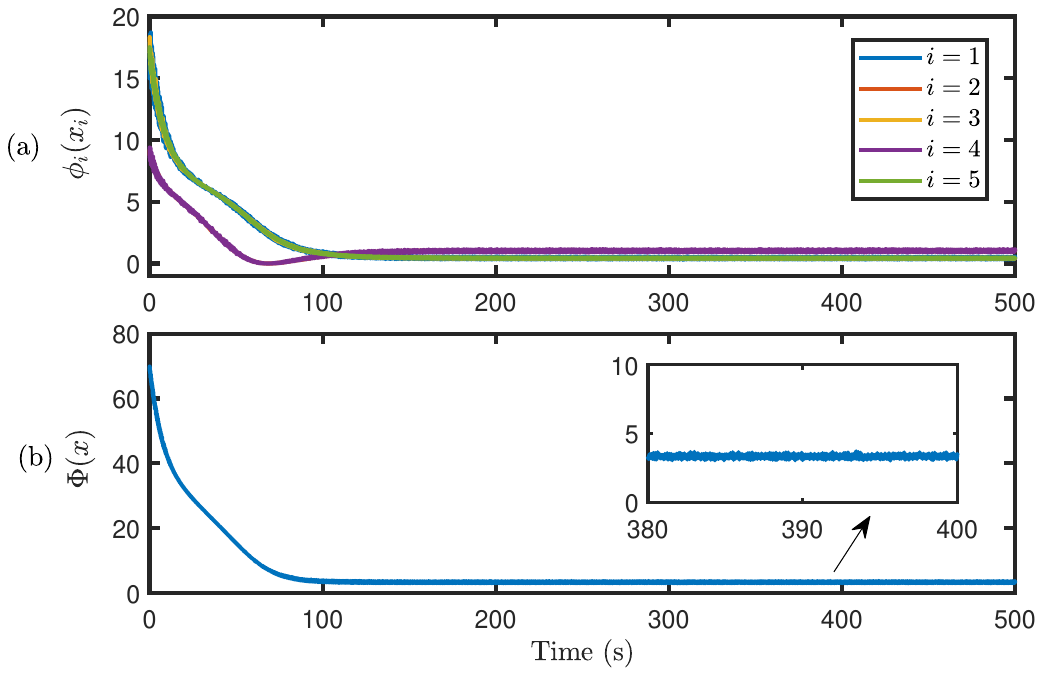}} 
\caption{Evolution of local and global cost functions.
(a): Evolution of $\phi_i (x_i)$, $\forall i \in {\mathcal V}$.
(b): Evolution of $\Phi(x)$. }\label{NCO-f}
\end{figure}

\begin{example}
\textbf{(Example for distributed nonconvex optimization)}
In this numerical example, the SIDT algorithm (\ref{SIDT}) is applied to a distributed nonconvex optimization problem within a delay-affected NNS (\ref{NE-system}).
The local and global cost functions for this scenario are detailed in Table \ref{NE-Table2}.
We initialize the system with a radius \( r = 5 \), namely, initial at $(- 5, 5)$,
and select the delay parameter \( d \) randomly in the range \( d \in \{0.01, 0.02, 0.03\} \).

The simulation results are shown in Fig.s \ref{NCO-xi} and \ref{NCO-f}.
Fig. \ref{NCO-xi} shows the temporal evolution of the system states \( x_i(t) \) and state errors \( e_{x_i}(t) \).
It clearly demonstrates that the states $x_i (t)$ converge to consensus, thereby satisfying the consensus constraints and practically converging the optimal solution \( x^* = 1.4 \).
In Fig. \ref{NCO-f}, it presents the evolution of the local cost functions \( \phi_i(x_i) \), for all \( i \in \mathcal{V} \), and the global cost function \( \Phi(x) \) over time.
These phenomena substantiate that the SIDT algorithm (\ref{SIDT}) effectively controls the delay-affected NNS (\ref{NE-system}), achieving IDTSC in the practical sense for distributed nonconvex optimization.
\end{example}

\section{Conclusion}
In this paper, we have introduced the IDTSC concept, providing a robust foundation for analyzing distributed optimization problems in NNSs with input delays and consensus constraints.
To achieve IDTSC practically in constrained optimization problem of delay-affected NNSs,
a novel SIDT algorithm has been proposed.
Furthermore, it has been demonstrated that the proposed algorithm can be extended to nonconvex optimization problems under P-{\L} condition, showcasing the algorithm's robustness and adaptability in less restrictive optimization environments.
Comprehensive simulations have validated the efficacy of the proposed algorithm, affirming its capability to achieve IDTSC in both convex and nonconvex distributed optimization scenarios within delay-affected NNSs.
The future work is to explore the optimization in NNSs subject to time-varying input delays and affine formation constraints.

\end{document}